\pgfplotsset{compat=1.14}
\newlength\myindent
\newcommand\bindent{%
  \begingroup
  \setlength{\itemindent}{\myindent}
  \addtolength{\algorithmicindent}{\myindent}
}
\newcommand\eindent{\endgroup}
\newcommand\abs[1]{\left|#1\right|}
\newcommand\norm[1]{\left\Vert#1\right\Vert}
\newcommand\diag[1]{\operatorname{diag}\left(#1\right)}
\newcommand\re[1]{\operatorname{Re}\left(#1\right)}
\newcommand\sv[2]{\operatorname{sv}_{#1}(#2)}
\newcommand\hd[2]{\operatorname{hd}(#1,#2)}
\newcommand\func[1]{\operatorname{function}~[#1]}
\DeclareMathOperator{\specr}{specR}
\DeclareMathOperator{\edger}{edgeR}
\DeclarePairedDelimiter\ceil{\lceil}{\rceil}
\newtheorem{theorem}{Theorem}[section]
\newtheorem{lemma}[theorem]{Lemma}
\newtheorem{corollary}[theorem]{Corollary}
\newtheorem{proposition}[theorem]{Proposition}
\theoremstyle{definition}
\newtheorem{example}[theorem]{Example}
\begin{document}

\begin{frontmatter}
\title{On the Graph Laplacian and the Rankability of Data}
\author{Thomas R. Cameron\fnref{dfootnote}}
\author{Amy N. Langville\fnref{cfootnote}}
\author{Heather C. Smith\fnref{dfootnote}}
\fntext[dfootnote]{Department of Mathematics and Computer Science, Davidson College, Davidson, NC (thcameron@davidson.edu, hcsmith@davidson.edu)}
\fntext[cfootnote]{Department of Mathematics, College of Charleston, Charleston, SC \,(langvillea@cofc.edu)}

\begin{abstract}
Recently, Anderson et al. (2019) proposed the concept of rankability, which refers to a dataset's inherent ability to produce a meaningful ranking of its items. 
In the same paper, they proposed a rankability measure that is based on a integer program for computing the minimum number of edge changes made to a directed graph in order to obtain a complete dominance graph, i.e., an acyclic tournament graph.
In this article, we prove a spectral-degree characterization of complete dominance graphs and apply this characterization to produce a new measure of rankability that is cost-effective and more widely applicable.
We support the details of our algorithm with several results regarding the conditioning of the Laplacian spectrum of complete dominance graphs and the Hausdorff distance between their Laplacian spectrum and that of an arbitrary directed graph with weights between zero and one.
Finally, we analyze the rankability of datasets from the world of chess and college football.
\end{abstract}

\begin{keyword}
 \texttt{directed graphs, graph Laplacian, eigenvalues, ranking, rankability, perturbation theory}
 \MSC[2010]  90C35 \sep 05C20 \sep 05C22 \sep 05C50 \sep 62F07 \sep 47A55
\end{keyword}

\end{frontmatter}

\section{Introduction}\label{sec:intro}
The ranking of data has a long and interesting history which intersects the seminal works on search engines~\cite{Brin1998}, college sports rankings~\cite{Colley2002,Massey1997}, and fair voting systems~\cite{Arrow1970}.
In addition, modern applications of ranking include movie databases, recommendation systems, social networks, and college rankings.
For a detailed account of the history of ranking and ranking methods, see~\cite{Langville2012}.

More recently, the concept of rankability was proposed by Anderson et al.~\cite{Anderson2019},
where they define a rankability measure that is based on how far a given directed graph (digraph) is from a \emph{complete dominance graph}, i.e., an acyclic tournament graph.
A complete dominance graph represents an ideal situation where all possible comparisons are explored and there is a clear and unique ranking, e.g., the complete dominance graph associated with the ranking $(1,2,3,4,5,6)$ is shown in Figure~\ref{fig:dom}.

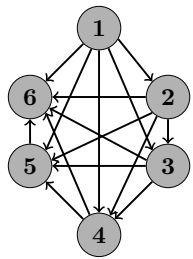
\begin{figure}[ht]
\centering
\resizebox{0.20\textwidth}{!}{
\begin{tikzpicture}
	\node[circle,draw=black,fill=black!30] (1) at (0,2) {\textbf{1}};
	\node[circle,draw=black,fill=black!30] (2) at (1,1) {\textbf{2}};
	\node[circle,draw=black,fill=black!30] (3) at (1,0) {\textbf{3}};
	\node[circle,draw=black,fill=black!30] (4) at (0,-1) {\textbf{4}};
	\node[circle,draw=black,fill=black!30] (5) at (-1,0) {\textbf{5}};
	\node[circle,draw=black,fill=black!30] (6) at (-1,1) {\textbf{6}};
	
	\draw[black,->,thick](1) to [out=330,in=135,looseness=0](2);
	\draw[black,->,thick](1) to [out=295,in=135,looseness=0](3);
	\draw[black,->,thick](1) to [out=270,in=90,looseness=0](4);
	\draw[black,->,thick](1) to [out=250,in=45,looseness=0](5);
	\draw[black,->,thick](1) to [out=225,in=45,looseness=0](6);
	\draw[black,->,thick](2) to [out=270,in=90,looseness=0](3);
	\draw[black,->,thick](2) to [out=240,in=60,looseness=0](4);
	\draw[black,->,thick](2) to [out=225,in=15,looseness=0](5);
	\draw[black,->,thick](2) to [out=180,in=0,looseness=0](6);
	\draw[black,->,thick](3) to [out=225,in=45,looseness=0](4);
	\draw[black,->,thick](3) to [out=180,in=0,looseness=0](5);
	\draw[black,->,thick](3) to [out=165,in=330,looseness=0](6);
	\draw[black,->,thick](4) to [out=135,in=315,looseness=0](5);
	\draw[black,->,thick](4) to [out=115,in=315,looseness=0](6);
	\draw[black,->,thick](5) to [out=90,in=270,looseness=0](6);
\end{tikzpicture}%
}
\caption{Complete dominance graph associated with ranking $(1,2,3,4,5,6)$.}
\label{fig:dom}
\end{figure}

In theory, the rankability measure proposed in~\cite{Anderson2019} is simple.
Given data that can be modeled as a digraph, with binary weights, let $k$ denote the minimum number of edge changes (additions or deletions) needed to obtain a complete dominance graph. 
Next, allowing for $k$ edge changes, let $p$ represent the number of complete dominance graphs that can be obtained.
Then, the rankability measure of the associated dataset is defined by
\begin{equation}\label{eq:simod-rank}
\edger = 1 - \frac{kp}{k_{max}p_{max}},
\end{equation}
where $k_{max}=(n^{2}-n)/2$ and $p_{max}=n!$.

In practice, the rankability measure from~\cite{Anderson2019}, which we denote by $\edger$, is very expensive to compute.
In fact, $k$ is computed by an integer program and $p$ is computed by an eliminative algorithm that has, in the worst case, factorial complexity.
In addition, measuring rankability based on the minimum number of edge changes necessary to obtain a complete dominance graph is limited to digraphs with binary weights. 

In this article, we propose a new cost-effective and more widely applicable measure of rankability that is based on a spectral-degree characterization of complete dominance graphs.
In particular, given data that can be modeled as a digraph with weights between zero and one, we measure the variation in the Laplacian spectrum and the out-degrees of the vertices from their known values for a complete dominance graph.
For digraphs, there are several definitions of the graph Laplacian, e.g., see~\cite{Bauer2012,Chung2005,Wu2005-1}; we adhere to the definition in~\cite{Wu2005-1}.

The spectral-degree characterization of complete dominance graphs relies on a spectral characterization of acyclic digraphs, which is an important result on its own.
In addition, we support our rankability measure algorithm with several interesting results regarding the Laplacian spectrum of complete dominance graphs.
First, we show that the Laplacian spectrum of a complete dominance graph is well-conditioned; therefore, for small perturbations the variance in the spectrum is guaranteed to be small.
Then, we show that a single edge change of a complete dominance graph changes one eigenvalue of the graph Laplacian by the weight of that edge.
Finally, we prove a sharp upper bound on the Hausdorff distance between the Laplacian spectrum of a complete dominance graph and any other digraph with weights between zero and one.

Before proceeding, we note the similarities and differences of our work and several prior related works.
First, the minimum number of edge changes in~\eqref{eq:simod-rank} is analogous to the minimum number of adjacent voter preference switches in the Dodgson method~\cite{Black1998,Hemaspaandra1997,Ratliff2010}.
However, this is not an exact similarity since there is no direct relation between edges and adjacent voter preferences.
Second, our characterization of acyclic digraphs is similar to Bauer's characterization in~\cite{Bauer2012}, but Bauer's characterization is for the normalized Laplace operator and is not related to the out-degrees of the vertices, as ours is.
Finally, some may find our work to be reminiscent of Landau's work on dominance relations and tournament graphs~\cite{Brauer1968,Harary1966,Landau1953}, but our focus on acyclic tournament graphs requires a stronger statement of both the out-degrees of the vertices and the eigenvalues of the graph Laplacian.

\section{The Graph Laplacian}\label{sec:laplacian}
Let $\mathbb{G}$ denote the set of finite simple digraphs with non-negative weights.
For each $\Gamma\in\mathbb{G}$, we have $\Gamma=(V,E,w)$, where $V=\left\{1,2,\ldots,n\right\}$ is the vertex set, $E\subseteq V\times V$ is the edge set, and $w\colon V\times V\rightarrow\mathbb{R}_{\geq 0}$ is the associated weight function.
If $(i,j)\in E$, then there is an edge from $i$ to $j$; the weight of the edge $(i,j)$ is given by $w_{ij}$.
We use the convention that $w_{ij}=0$ if and only if $(i,j)\notin E$.

A \emph{tournament} is a digraph in $\mathbb{G}$ such that for each $i,j\in V$ either $(i,j)\in E$ or $(j,i)\in E$, but not both. On the other hand, a digraph is \emph{acyclic} if it does not contain a set of edges $\{(x_i,y_i):i\in \{1,2,\ldots, k\}\}$ where $y_i=x_{i+1}$ for each $i<k$ and $y_{k}=x_1$. In this section, we prove the spectral degree characterization of acyclic digraphs and the spectral-degree characterization of \emph{complete dominance graphs}, which are acyclic tournaments where the weight of each edge is 1.

Given $\Gamma\in\mathbb{G}$, we define the \emph{out-degree} of the vertex $i\in V$ by 
\[
d^{+}(i)=\sum_{j\in V}w_{ij}.
\]
Furthermore, we define the \emph{out-degree matrix} by
\[
D=\diag{d^{+}(1),\ldots,d^{+}(n)},
\]
i.e., the diagonal matrix with entries $d^{+}(1),\ldots,d^{+}(n)$.
In addition, we define the \emph{weighted adjacency matrix} by 
\[
A=[w_{ij}]_{i,j=1}^{n},
\]
i.e., the $n\times n$ matrix whose $(i,j)$ entry corresponds to the weight $w_{ij}$. 
Then, we define the \emph{graph Laplacian} by
\[
L=D-A.
\]

Let $\Gamma=(V,E,w)\in\mathbb{G}$ and let $V'\subseteq V$.
Then, we define the induced subgraph $\Gamma'$ of $\Gamma$ as the digraph with vertex set $V'$, edge set $E'=E\cap(V'\times V')$, and weight function $w'=w\restriction_{V'\times V'}$.
We say that $\Gamma'$ is \emph{isolated} if $w_{ij}=0$ for all $i\in V'$ and $j\notin V'$.
A subgraph of $\Gamma$ is \emph{strongly connected} if for each pair of vertices $i$ and $j$, either $i=j$ or there is a directed path from $i$ to $j$ and a directed path from $j$ to $i$. A \emph{strongly connected component} of $\Gamma$ is a maximal strongly connected subgraph.

Note that a digraph $\Gamma$ can be uniquely decomposed into strongly connected components.
Also, the graph Laplacian, possibly after reordering of the vertices, can be written in Frobenius normal form~\cite{Brualdi1991}:
\begin{equation}\label{eq:fro-form}
L = \begin{bmatrix} L_{1} & L_{12} & \cdots & L_{1r} \\
				& L_{2} & \cdots & L_{2r} \\
				& & \ddots & \vdots \\
				& & & L_{r} \end{bmatrix},
\end{equation}
where the blocks $L_{k}$ are irreducible matrices that correspond to the strongly connected components $\Gamma_{k}$ of $\Gamma$. 
Let $V_{k}$ denote the vertex set of the strongly connected component $\Gamma_{k}$.
Then, the submatrices $L_{kl}$, $1\leq k< l\leq r$, contain elements of the form $-w_{ij}$ for all $i\in V_{k}$ and $j\in V_{l}$.

\subsection{Basic Spectral Properties}\label{subsec:basic-prop}
Given $\Gamma\in\mathbb{G}$, we denote the spectrum of the graph Laplacian $L$ by $\sigma(L)$.
First, we note several basic properties of the graph Laplacian and its spectrum.
Due to the elementary nature of these properties, we omit their proof. 

\begin{proposition}\label{prop:basic-spectrum}
Let $\Gamma\in\mathbb{G}$ and let $L$ be the graph Laplacian of $\Gamma$.
Then, the following properties hold.
\begin{enumerate}[(i)]
\item	Zero is an eigenvalue of $L$.
\item	The spectrum of $L$ is symmetric with respect to the real axis.
\item	Let $\lambda_{1},\ldots,\lambda_{n}$ denote the eigenvalues of $L$.
	Then, 
	\[
	\sum_{i=1}^{n}\lambda_{i} = \sum_{i=1}^{n}\re{\lambda_{i}} = \sum_{i=1}^{n}d^{+}(i).
	\]
\item	Let $L$ be in Frobenius normal form~\eqref{eq:fro-form}.
	Then,
	\[
	\sigma(L) = \bigcup_{i=1}^{r}\sigma(L_{i}). 
	\]
\item	The graph Laplacian $L$ is an M-matrix.
	Furthermore, if $L$ is in Frobenius normal form~\eqref{eq:fro-form}, then each $L_{k}$, $1\leq k\leq r$, is an irreducible M-matrix. 
\end{enumerate}
\end{proposition}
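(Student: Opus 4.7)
All five statements follow from elementary properties of the identity $L = D - A$, so my plan is to dispatch them in a single pass without introducing new machinery. I would begin by observing that each row of $L$ sums to zero: since $\mathbb{G}$ consists of simple digraphs we have $w_{ii}=0$, and row $i$ of $L$ has diagonal $d^{+}(i) = \sum_{j} w_{ij}$ together with off-diagonal entries $-w_{ij}$. Hence $L\mathbf{1}=0$, proving (i) with $\mathbf{1}$ as a right eigenvector for $0$.

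For (ii) and the first equality of (iii), the only fact needed is that $L$ is a real matrix: its characteristic polynomial has real coefficients, so non-real eigenvalues occur in complex conjugate pairs. This yields (ii) and also forces $\sum_{i} \lambda_{i} = \sum_{i} \re{\lambda_{i}}$ in (iii). The second equality in (iii) then follows from $\sum_{i} \lambda_{i} = \operatorname{tr}(L) = \sum_{i} L_{ii} = \sum_{i} d^{+}(i)$, again using $w_{ii}=0$. For (iv), since the Frobenius normal form~\eqref{eq:fro-form} is block upper triangular, the characteristic polynomial factors as $\det(\lambda I - L) = \prod_{i=1}^{r} \det(\lambda I - L_{i})$, which gives $\sigma(L) = \bigcup_{i=1}^{r} \sigma(L_{i})$ immediately.

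The only item requiring a bit more care is (v), and this is the step I expect to be the main obstacle, largely because several inequivalent conventions for M-matrix coexist in the literature. My plan is to fix the equivalent characterization of an M-matrix as a matrix of the form $sI - B$ with $B \geq 0$ and $s \geq \rho(B)$, matching the convention of \cite{Brualdi1991}. Taking $s = \max_{i} d^{+}(i)$ gives $B = sI - D + A \geq 0$, and a Gershgorin argument applied to $B$ shows $\rho(B) \leq s$, so $L$ is an M-matrix; alternatively, Gershgorin applied directly to $L$ shows every eigenvalue has non-negative real part, which together with the non-positive off-diagonal entries of $L$ gives the singular M-matrix property. For the irreducibility of each diagonal block $L_{k}$, I would invoke the standard correspondence between irreducibility of a nonnegative matrix and strong connectedness of its associated digraph: $L_{k}$ is, up to the sign of its off-diagonal entries, the weighted Laplacian of the strongly connected component $\Gamma_{k}$, hence irreducible, and the same M-matrix argument applied to $L_{k}$ completes the proof.
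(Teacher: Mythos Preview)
Your argument is correct, and there is nothing to compare against: the paper explicitly omits the proof of this proposition, calling the properties elementary. The arguments you sketch for (i)--(iv) are the standard ones and would be accepted without further elaboration.

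One small imprecision in (v): you describe $L_{k}$ as ``up to the sign of its off-diagonal entries, the weighted Laplacian of the strongly connected component $\Gamma_{k}$.'' That is not quite right---the diagonal entries of $L_{k}$ are the out-degrees $d^{+}(i)$ computed in the \emph{full} graph $\Gamma$, not in $\Gamma_{k}$, so when $\Gamma_{k}$ is not isolated the row sums of $L_{k}$ are strictly positive rather than zero. Fortunately this does not affect either conclusion you draw: irreducibility depends only on the zero pattern of the off-diagonal entries, which does coincide with the adjacency structure of the strongly connected $\Gamma_{k}$; and your Ger\v{s}gorin argument for the M-matrix property only needs that the off-diagonal row sums are bounded by the diagonal entry, which still holds (with possible strict inequality) since $\sum_{j\in V_{k},\,j\neq i} w_{ij} \leq d^{+}(i)$. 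So the proof goes through, but you should rephrase that sentence to avoid the misleading identification.
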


Next, we consider the relationship between the zero eigenvalues of the graph Laplacian and the isolated strongly connected components of the digraph.
Note that we make use of the following famous result from Taussky~\cite[Theorem II]{Taussky1949} in the proof of Lemma~\ref{lem:sc-zero-eig}.

\begin{theorem}\label{thm:taussky}
The complex matrix $A=[a_{ij}]_{i,j=1}^{n}$ is non-singular provided that $A$ is irreducible and
\[
\abs{a_{ii}}\geq\sum_{j\neq i}\abs{a_{ij}},
\]
is satisfied for all $i=1,\ldots,n$, with equality in at most $(n-1)$ cases. 
\end{theorem}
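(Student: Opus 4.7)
The plan is to prove this by contradiction. Suppose $A$ is singular, so there exists a nonzero vector $x=(x_{1},\ldots,x_{n})^{T}$ with $Ax=0$. Let $k$ be an index at which $\abs{x_{k}}=\max_{i}\abs{x_{i}}>0$. Reading off the $k$-th coordinate of the equation $Ax=0$ as $a_{kk}x_{k}=-\sum_{j\neq k}a_{kj}x_{j}$ and applying the triangle inequality yields
\[
\abs{a_{kk}}\,\abs{x_{k}}\leq \sum_{j\neq k}\abs{a_{kj}}\abs{x_{j}}\leq \abs{x_{k}}\sum_{j\neq k}\abs{a_{kj}}.
\]
Combined with the diagonal dominance hypothesis $\abs{a_{kk}}\geq\sum_{j\neq k}\abs{a_{kj}}$, both inequalities must be equalities. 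In particular, $\abs{a_{kk}}=\sum_{j\neq k}\abs{a_{kj}}$; moreover, rewriting equality in the second inequality as $\sum_{j\neq k}\abs{a_{kj}}(\abs{x_{k}}-\abs{x_{j}})=0$ forces $\abs{x_{j}}=\abs{x_{k}}$ for every $j$ with $a_{kj}\neq 0$.

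Next, I would define $S=\{i\colon \abs{x_{i}}=\abs{x_{k}}\}$ and apply the same reasoning with $k$ replaced by an arbitrary $i\in S$; since $\abs{x_{i}}$ equals the maximum modulus, the argument reproduces verbatim. This yields, for every $i\in S$, both the row-sum equality $\abs{a_{ii}}=\sum_{j\neq i}\abs{a_{ij}}$ and the closure property that every $j$ with $a_{ij}\neq 0$ lies in $S$. The set $S$ is nonempty (it contains $k$), and the closure property says that in the directed graph associated with $A$, whose arcs are the pairs $(i,j)$ with $a_{ij}\neq 0$, no arc leaves $S$.

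Finally I invoke irreducibility: the standard characterization states that $A$ is irreducible if and only if this associated digraph is strongly connected, so $S$ cannot be a proper nonempty subset of $\{1,\ldots,n\}$. Hence $S=\{1,\ldots,n\}$, and equality in the diagonal dominance condition holds in all $n$ rows, contradicting the hypothesis that equality is attained in at most $n-1$ rows. The main obstacle is the propagation step: one must verify that the row-sum equality and the closure property genuinely hold for \emph{every} $i\in S$ and not just for the initially chosen $k$, and one must correctly link the combinatorial notion of strong connectedness of the associated digraph to the algebraic notion of irreducibility of $A$.
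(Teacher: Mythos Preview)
The paper does not actually prove this theorem; it is quoted as a known result from Taussky~\cite[Theorem II]{Taussky1949} and used as a tool in the proof of Lemma~\ref{lem:sc-zero-eig}. So there is no ``paper's own proof'' to compare against.

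That said, your argument is correct and is essentially the classical proof. Regarding the obstacle you flag about the propagation step: it does go through, because membership in $S$ is precisely the statement that $\abs{x_{i}}$ attains the global maximum, so for any $i\in S$ the triangle-inequality computation in row $i$ is identical to the one you carried out for $k$, yielding both $\abs{a_{ii}}=\sum_{j\neq i}\abs{a_{ij}}$ and $\abs{x_{j}}=\abs{x_{i}}$ whenever $a_{ij}\neq 0$. Your linkage of irreducibility to strong connectedness is also standard: if $S$ were proper, ordering the vertices with the complement $S^{c}$ first and $S$ second would put $A$, after simultaneous row and column permutation, into block upper-triangular form $\begin{bmatrix} * & * \\ 0 & * \end{bmatrix}$ (the zero block coming from the absence of arcs leaving $S$), contradicting irreducibility. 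The contradiction with the hypothesis ``equality in at most $n-1$ rows'' then follows exactly as you wrote.
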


\begin{lemma}\label{lem:sc-zero-eig}
Let $\Gamma\in\mathbb{G}$ and let the graph Laplacian $L=[l_{ij}]_{i,j=1}^{n}$ of $\Gamma$ be in Frobenius normal form~\eqref{eq:fro-form}.
If $\Gamma_{k}$ is not isolated, then zero is not an eigenvalue of $L_{k}$.
Furthermore, if $\Gamma_{k}$ is isolated, then zero is a simple eigenvalue of $L_{k}$.
\end{lemma}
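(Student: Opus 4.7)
The plan is to read off the entries of each diagonal block $L_k$ directly from the definition of $L$ and then apply Theorem~\ref{thm:taussky} twice: once to settle the non-isolated case, and once (combined with a maximum-principle propagation argument) to obtain simplicity in the isolated case.

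First I would record the entries of $L_k$. For $i\in V_k$ the diagonal entry is $(L_k)_{ii}=d^{+}(i)=\sum_{j\in V}w_{ij}$, while the off-diagonal entries of row $i$ are $-w_{ij}$ for $j\in V_k\setminus\{i\}$. Consequently the $i$-th row sum of $L_k$ equals $\sum_{j\notin V_k}w_{ij}\ge 0$, so $L_k$ is always weakly row-diagonally dominant, and row $i$ is \emph{strictly} diagonally dominant precisely when vertex $i$ has an edge to some vertex outside $V_k$. Also, because $\Gamma_k$ is strongly connected by construction, $L_k$ is irreducible.

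For the non-isolated case I would argue as follows: by the definition of isolation, $\Gamma_k$ not isolated means there exist $i_0\in V_k$ and $j_0\notin V_k$ with $w_{i_0 j_0}>0$, which gives strict diagonal dominance in the $i_0$-th row of $L_k$. All remaining rows are weakly diagonally dominant, so equality holds in at most $|V_k|-1$ rows. Theorem~\ref{thm:taussky} then yields that $L_k$ is non-singular, hence $0\notin\sigma(L_k)$.

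For the isolated case, every row sum vanishes, so $L_k\mathbf{1}=0$ and $0\in\sigma(L_k)$. To upgrade this to simplicity I would give a direct propagation argument, which I expect to be the main obstacle since Taussky's theorem itself only rules out singularity and does not yield a one-dimensional kernel. Given $L_k x=0$, split into real and imaginary parts (each lies in $\ker L_k$) and treat a real null vector $x$. Pick $i_0\in\arg\max_{i\in V_k}x_i$; using isolation, the $i_0$-th equation rearranges to
\[
\sum_{j\in V_k\setminus\{i_0\}}w_{i_0 j}\,(x_{i_0}-x_j)=0,
\]
and since each summand is non-negative, $x_j=x_{i_0}$ for every out-neighbor $j$ of $i_0$ inside $\Gamma_k$. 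Iterating along directed paths and invoking the strong connectivity of $\Gamma_k$, I obtain $x_j=x_{i_0}$ for all $j\in V_k$. Thus $\ker L_k=\operatorname{span}\{\mathbf{1}\}$, establishing that the zero eigenvalue of $L_k$ is simple.
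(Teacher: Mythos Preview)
Your treatment of the non-isolated case is essentially identical to the paper's: both compute the row sums of $L_k$, observe that at least one row is strictly diagonally dominant, and invoke Theorem~\ref{thm:taussky}.

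For the isolated case there is a genuine gap. Your propagation argument shows only that $\ker L_k=\operatorname{span}\{\mathbf{1}\}$, i.e., that the \emph{geometric} multiplicity of $0$ is $1$. The lemma requires $0$ to be a \emph{simple} eigenvalue, meaning algebraic multiplicity $1$, and this stronger statement is precisely what Theorem~\ref{thm:isolated} needs in order to equate conditions (i) and (iii). Nothing in your argument rules out a non-trivial Jordan block at $0$, i.e., the existence of some $v$ with $L_k v=\mathbf{1}$.

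The paper handles this by noting that $L_k$ is a singular irreducible M-matrix and citing the Perron--Frobenius-based result \cite[Theorem~6.4.16]{Berman1994}, which yields algebraic simplicity directly. If you wish to keep your elementary style, one repair is to obtain a strictly positive left null vector $y$ of $L_k$ (this follows from Perron--Frobenius applied to the irreducible non-negative matrix $sI-L_k$ for $s$ equal to the largest diagonal entry; note that the column sums of $L_k$ need not vanish, so $\mathbf{1}$ is generally not a left null vector and a separate argument is required). Then $L_k v=\mathbf{1}$ would give $0=y^{T}L_k v=y^{T}\mathbf{1}>0$, a contradiction, so no generalized eigenvector exists and algebraic simplicity follows. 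Without some such step, the proof of the isolated case is incomplete.
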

\begin{proof}
Suppose that $\Gamma_{k}$ is not isolated.
Then, there exists a vertex $i\in V_{k}$ such that $w_{ij}\neq 0$ for some $j\notin V_{k}$.
Therefore, since the weights are non-negative, we have
\[
\abs{l_{ii}}=d^{+}(i) > \sum_{j\in V_{k}}w_{ij}=\sum_{j\in V_{k}}\abs{l_{ij}}.
\]
For all other $i\in V_{k}$, we have
\[
\abs{l_{ii}}=d^{+}(i)\geq\sum_{j\in V_{k}}w_{ij}=\sum_{j\in V_{k}}\abs{l_{ij}}.
\]
Hence, by Theorem~\ref{thm:taussky}, it follows that $L_{k}$ is non-singular.

Now, suppose that $\Gamma_{k}$ is isolated.
Then, the row sums of $L_{k}$ are zero and it follows that $L_{k}$ is a singular irreducible M-matrix. 
Therefore, by~\cite[Theorem 6.4.16]{Berman1994}, it follows that zero is a simple eigenvalue of $L_{k}$.
\end{proof}

We conclude this section by noting the following theorem which is a direct consequence of Lemma~\ref{lem:sc-zero-eig} and the Frobenius normal form. 

\begin{theorem}\label{thm:isolated}
Let $\Gamma\in\mathbb{G}$ and let $L$ be the graph Laplacian of $\Gamma$.
Then, for $k\in\mathbb{N}$, the following statements are equivalent:
\begin{enumerate}[(i)]
\item	The algebraic multiplicity of the zero eigenvalue of $L$ is equal to $k$.
\item	There are exactly $k$ isolated strongly connected components of $\Gamma$.
\item	The geometric multiplicity of the zero eigenvalue of $L$ is equal to $k$. 
\end{enumerate}
\end{theorem}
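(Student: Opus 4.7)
Let $m$ denote the number of isolated strongly connected components of $\Gamma$. My plan is to show that both the algebraic and the geometric multiplicity of $0$ as an eigenvalue of $L$ are equal to $m$; the three-way equivalence in the theorem is then immediate, since each of (i), (ii), (iii) becomes the single statement $m = k$.

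For the algebraic multiplicity, I would place $L$ in Frobenius normal form~\eqref{eq:fro-form}, with diagonal blocks $L_1,\ldots,L_r$ corresponding to the strongly connected components $\Gamma_1,\ldots,\Gamma_r$. Proposition~\ref{prop:basic-spectrum}(iv) together with the block upper triangular structure gives that the algebraic multiplicity of $0$ in $\sigma(L)$ equals the sum over $i$ of the algebraic multiplicity of $0$ in $\sigma(L_i)$. By Lemma~\ref{lem:sc-zero-eig}, this contribution is $1$ when $\Gamma_i$ is isolated and $0$ otherwise, so the total is exactly $m$.

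For the geometric multiplicity, I would first reorder the components so that the isolated ones appear last; this is possible because an isolated component, having no outgoing cross-edges, is a sink in the condensation DAG, and so a compatible topological order exists. With this choice, $L$ takes the form
\[
L = \begin{bmatrix} L' & B \\ 0 & L'' \end{bmatrix},
\]
where $L'$ is block upper triangular with diagonal blocks corresponding to the non-isolated components (hence nonsingular by Lemma~\ref{lem:sc-zero-eig}), and $L''$ is block diagonal in the $m$ blocks corresponding to the isolated components (block diagonal because, by isolation, there are no edges between distinct isolated components). Writing $v = (v_1, v_2)^\top$ accordingly, the equation $Lv = 0$ becomes $L'' v_2 = 0$ together with $L' v_1 = -B v_2$; since $L'$ is invertible, $v_1$ is uniquely determined by $v_2$, and since each diagonal block of $L''$ has a one-dimensional kernel by Lemma~\ref{lem:sc-zero-eig}, we obtain $\dim \ker L = \dim \ker L'' = m$.

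The only non-routine step is the reordering used in the second part, and even this is standard once one recognizes that isolated components are exactly the sinks of the condensation. Everything else is a direct bookkeeping on the Frobenius normal form using Lemma~\ref{lem:sc-zero-eig}.
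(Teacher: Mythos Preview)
Your proposal is correct and is precisely the argument the paper has in mind: the paper does not give a proof at all, only the sentence ``a direct consequence of Lemma~\ref{lem:sc-zero-eig} and the Frobenius normal form,'' and your write-up is the natural way to unpack that sentence. The algebraic-multiplicity half is immediate from Proposition~\ref{prop:basic-spectrum}(iv) and Lemma~\ref{lem:sc-zero-eig}; for the geometric multiplicity you correctly observe that isolated components are exactly the sinks of the condensation, push them to the end of the topological order, and use invertibility of the non-isolated block $L'$ to conclude $\dim\ker L=\dim\ker L''=m$. The only point worth making explicit (you do, but briefly) is that the block-diagonality of $L''$ is essential: for a merely block upper triangular matrix whose diagonal blocks each have one-dimensional kernel, the total kernel can be strictly smaller, so it is the two-sided absence of edges between isolated components that forces $\ker L''=\bigoplus_i\ker L_i$.
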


\subsection{Acyclic Digraphs}\label{subsec:acyclic}
Now, we are ready to prove the spectral characterization of acyclic digraphs and then the spectral-degree characterization of complete dominance graphs. 
First, note the following proposition, which follows readily from the Frobenius normal form. 

\begin{proposition}\label{prop:acyclic-fro}
The following statements are equivalent:
\begin{enumerate}[(i)]
\item	The digraph $\Gamma\in\mathbb{G}$ is acyclic.
\item	Every strongly connected component of $\Gamma$ consists of exactly one vertex. 
\item	The graph Laplacian $L$ of $\Gamma$ in Frobenius normal form is upper triangular.  
\end{enumerate}
\end{proposition}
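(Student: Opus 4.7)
The plan is to establish the equivalences in a cycle: (i)$\Rightarrow$(ii)$\Rightarrow$(iii)$\Rightarrow$(i). Each implication should be short because the heavy lifting (existence of a Frobenius normal form with irreducible diagonal blocks indexed by the strongly connected components) is already packaged into the setup preceding the proposition.

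For (i)$\Rightarrow$(ii), I would argue by contradiction: if some strongly connected component $\Gamma_k$ contained two distinct vertices $u,v$, then by definition of strong connectivity there would be a directed path from $u$ to $v$ and another from $v$ to $u$; concatenating them yields a finite sequence of edges $\{(x_i,y_i)\}$ with $y_i=x_{i+1}$ and $y_k=x_1$, violating acyclicity. For (ii)$\Rightarrow$(iii), I would simply note that in the Frobenius normal form~\eqref{eq:fro-form} every diagonal block $L_k$ corresponds to a strongly connected component $\Gamma_k$; if each $\Gamma_k$ has exactly one vertex, then each $L_k$ is $1\times 1$, so the whole block upper triangular matrix is in fact upper triangular in the ordinary sense.

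For (iii)$\Rightarrow$(i), the key observation is that an irreducible matrix of size $\geq 2$ cannot be upper triangular, because in the directed graph of the matrix the last vertex would have no outgoing edge to any earlier vertex and hence could not reach them. Since the diagonal blocks $L_k$ in~\eqref{eq:fro-form} are irreducible, each $L_k$ must therefore be $1\times 1$; hence each strongly connected component of $\Gamma$ is a single vertex, and no directed cycle can exist (a cycle would force two of its vertices into the same component).

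I do not anticipate a serious obstacle here; the only point that requires a moment of care is the observation in (iii)$\Rightarrow$(i) that an upper triangular irreducible matrix must be $1\times 1$, which I would justify by the graph-theoretic characterization of irreducibility (strong connectivity of the digraph of the nonzero off-diagonal pattern). Everything else is a direct unwinding of the definitions of cycle, strong component, and Frobenius normal form as laid out in Section~\ref{sec:laplacian}.
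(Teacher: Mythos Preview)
Your proposal is correct and aligns with the paper's approach: the paper does not give a detailed proof, stating only that the proposition ``follows readily from the Frobenius normal form,'' and your cycle (i)$\Rightarrow$(ii)$\Rightarrow$(iii)$\Rightarrow$(i) is precisely the natural unpacking of that remark. The only step requiring any thought is your observation in (iii)$\Rightarrow$(i) that an irreducible upper triangular block must be $1\times 1$, which you justify appropriately.
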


\begin{theorem}\label{thm:acyclic-spec}
Let $\Gamma\in\mathbb{G}$ and let $L$ be the graph Laplacian of $\Gamma$.
Then,
\begin{equation}\label{eq:acyclic-spec}
\sigma(L)=\left\{d^{+}(1),\ldots,d^{+}(n)\right\}
\end{equation}
if and only if $\Gamma$ is acyclic.
\end{theorem}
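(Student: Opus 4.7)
My plan is to prove the two directions separately, using the Frobenius normal form for the easy direction and a cycle expansion of the characteristic polynomial for the harder direction.

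\emph{Easy direction ($\Leftarrow$).} First I would invoke Proposition~\ref{prop:acyclic-fro}: since $\Gamma$ is acyclic, $L$ in Frobenius normal form is upper triangular and each diagonal block has size $1 \times 1$. Because $\Gamma$ is simple, $w_{ii}=0$, so the $(i,i)$-entry of $L$ equals $d^+(i)$. The spectrum of an upper-triangular matrix is its diagonal, so $\sigma(L) = \{d^+(1),\ldots,d^+(n)\}$.

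\emph{Hard direction ($\Rightarrow$).} Here I would argue the contrapositive: if $\Gamma$ contains a directed cycle, with girth $m \geq 2$, then $\sigma(L) \neq \{d^+(1),\ldots,d^+(n)\}$ as multisets. The main tool will be the permutation expansion of $\det(\lambda I - L)$. Using $(\lambda I - L)_{ii} = \lambda - d^+(i)$, $(\lambda I - L)_{ij} = w_{ij}$ for $i \neq j$, and grouping each $\pi \in S_n$ by its cycles of length $\geq 2$ (with $\operatorname{sgn}(\pi) = \prod_c (-1)^{\ell(c)-1}$), I will derive
\[
\det(\lambda I - L) = \sum_{\mathcal{C}} \prod_{c \in \mathcal{C}} (-1)^{\ell(c)-1} W(c) \prod_{i \notin V(\mathcal{C})} (\lambda - d^+(i)),
\]
where $\mathcal{C}$ ranges over collections of vertex-disjoint directed cycles of length $\geq 2$ in $\Gamma$, $\ell(c)$ is the length of $c$, and $W(c) > 0$ is the product of edge weights around $c$. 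The empty collection reproduces $\prod_i(\lambda - d^+(i))$, so $\sigma(L) = \{d^+(1),\ldots,d^+(n)\}$ is equivalent to the remaining sum vanishing identically.

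Next I would read off the coefficient of $\lambda^{n-m}$ in the correction. Every nonempty $\mathcal{C}$ satisfies $|V(\mathcal{C})| \geq m$ by the girth bound, so contributes a polynomial of degree at most $n-m$; equality forces $\mathcal{C} = \{c\}$ for a single directed $m$-cycle $c$. The coefficient of $\lambda^{n-m}$ in $\det(\lambda I - L) - \prod_i(\lambda - d^+(i))$ is therefore $(-1)^{m-1} \sum_{c\colon m\text{-cycle}} W(c)$, which is strictly nonzero because at least one $m$-cycle exists and its weight product is positive. This contradicts $\sigma(L) = \{d^+(1),\ldots,d^+(n)\}$ as multisets. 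The main obstacle will be the bookkeeping in establishing the cycle expansion itself: one must check that $\operatorname{sgn}(\pi)$ decomposes as $\prod_c(-1)^{\ell(c)-1}$ over the non-trivial cycles of $\pi$, that the fixed-point factors $(\lambda - d^+(i))$ survive intact, and that only cycles actually realized in $\Gamma$ (those with $W(c) > 0$) contribute; once the formula is in hand, the girth-based degree count is immediate.
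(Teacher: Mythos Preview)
Your proof is correct, and the hard direction takes a genuinely different route from the paper.

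For the forward direction you and the paper agree: Proposition~\ref{prop:acyclic-fro} puts $L$ in upper-triangular form with the out-degrees on the diagonal.

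For the converse, the paper argues by contradiction through the block structure of the Frobenius normal form. It isolates the strongly connected components $\Gamma_1,\ldots,\Gamma_p$ of size at least two, observes that the multiset of eigenvalues of $L_1\oplus\cdots\oplus L_p$ must then coincide with the out-degrees of their vertices, and derives a contradiction via M-matrix theory: if some $\Gamma_k$ is isolated, Lemma~\ref{lem:sc-zero-eig} forces a zero eigenvalue that cannot match any out-degree; if none is isolated, the paper picks the block $B$ whose diagonal carries the globally smallest out-degree, applies Perron--Frobenius to the positive matrix $B^{-1}$ to obtain a positive eigenvector for the smallest eigenvalue $1/\rho(B^{-1})$, and reads off from the first coordinate of the eigenvector equation that either the first row of $B$ vanishes off-diagonal (impossible in a nontrivial SCC) or $b_{11}<1/\rho(B^{-1})$ (impossible since the off-diagonal contributions are nonpositive).

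Your argument bypasses all of this machinery. The Leibniz expansion of $\det(\lambda I-L)$, grouped by the nontrivial cycles of each permutation, is a clean and entirely elementary substitute: it directly identifies $\det(\lambda I-L)-\prod_i(\lambda-d^+(i))$ with a sum over nonempty vertex-disjoint cycle collections in $\Gamma$, and the girth bound singles out the $\lambda^{n-m}$ coefficient as a nonzero signed sum $(-1)^{m-1}\sum_{c}W(c)$ over the shortest cycles. This needs no Perron--Frobenius, no M-matrix facts, and no case split on isolated components. It also yields slightly more: it tells you exactly where the characteristic polynomial first deviates from $\prod_i(\lambda-d^+(i))$. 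The trade-off is that the paper's route reuses the structural results (Lemma~\ref{lem:sc-zero-eig}, Proposition~\ref{prop:basic-spectrum}(v)) already set up for other purposes, so within the paper's narrative its proof is more economical, whereas yours is self-contained and arguably more transparent.
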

\begin{proof}
Suppose that $\Gamma$ is acyclic.
Then, it follows from Proposition~\ref{prop:acyclic-fro} that the Frobenius normal form of $\Gamma$ is upper triangular. 
Since the eigenvalues of an upper triangular matrix are its main-diagonal entries, it follows that the spectrum of $L$ satisfies~\eqref{eq:acyclic-spec}.

Conversely, for the sake of contradiction, suppose that the spectrum of $L$ satisfies~\eqref{eq:acyclic-spec} and $\Gamma$ is not acyclic.
Then, there is a strongly connected component of $\Gamma$ made up of at least two vertices.
Without loss of generality, let $\Gamma_{1},\ldots,\Gamma_{p}$ be the strongly connected components of $\Gamma$ that consist of at least two vertices.
Since the remaining strongly connected components of $\Gamma$ have exactly one vertex, it follows that the eigenvalues of $L_{1}\oplus\cdots\oplus L_{p}$ correspond to the out-degrees of the vertices of $\Gamma_{1}\cup\cdots\cup\Gamma_{p}$.

Suppose that $\Gamma_{k}$ is isolated for some $1\leq k\leq p$. 
Then, by Lemma~\ref{lem:sc-zero-eig}, zero is an eigenvalue of $L_{k}$ and, therefore, of $L_{1}\oplus\cdots\oplus L_{p}$.
However, since the eigenvalues of $L_{1}\oplus\cdots\oplus L_{p}$ must correspond to the out-degrees of the vertices of $\Gamma_{1}\cup\cdots\cup\Gamma_{p}$, which are all non-zero, this is a contradiction.

Now, suppose that $\Gamma_{k}$ is not isolated for all $k=1,\ldots,p$.
By assumption, the smallest eigenvalue $\lambda_{0}$ of $L_{1}\oplus\cdots\oplus L_{p}$ is equal to the smallest out-degree of the vertices of $\Gamma_{1}\cup\cdots\cup\Gamma_{p}$.
Let $B=L_{i}$, $1\leq i\leq p$, such that $\lambda_{0}$ is a main-diagonal entry of $B$.
Also, without loss of generality, we assume that the main-diagonal entries of $B$ are in non-decreasing order. 

Since $B$ is an irreducible non-singular M-matrix, it follows from~\cite[Theorem 6.2.3 (N38)]{Berman1994} that $B^{-1}$ exists, is positive, and is irreducible. 
Furthermore, by the Perron-Frobenius theorem~\cite[Theorem 2.1.4]{Berman1994}, it follows that $\rho(B^{-1})$ is a simple eigenvalue of $B^{-1}$ with a positive eigenvector $\textbf{x}$.
In addition, we normalize the positive eigenvector $\textbf{x}$ so that $\textbf{x}_{1}=1$. 
Therefore, the eigenvector equation
\[
B\textbf{x} = \frac{1}{\rho(B^{-1})}\textbf{x}
\]
implies that 
\begin{equation}\label{eq:eigvec}
b_{11}-b_{12}\textbf{x}_{2} -\cdots-b_{1n_{i}}\textbf{x}_{n_{i}} = \frac{1}{\rho(B^{-1})},
\end{equation}
where $n_{i}$ is the size of $L_{i}$ and the entries $b_{12},\ldots,b_{1n_{i}}$ are non-negative. 

Now, we have two cases to consider: $\lambda_{0}\in\sigma(B)$ and $\lambda_{0}\notin\sigma(B)$.
If $\lambda_{0}\in\sigma(B)$, then $b_{11}=1/\rho(B^{-1})$ and it follows from~\eqref{eq:eigvec} that $b_{12},\ldots,b_{1n_{i}}$ are all zero, which contradicts $\Gamma_{i}$ being a strongly connected component made up of at least two vertices. 
If $\lambda_{0}\notin\sigma(B)$, then $b_{11}<1/\rho(B^{-1})$, which directly contradicts~\eqref{eq:eigvec}.
\end{proof}

The spectral-degree characterization of complete dominance graphs is stated in the corollary below.

\begin{corollary}\label{cor:dom-eigval}
Let $\Gamma\in\mathbb{G}$ have weights $0\leq w_{ij}\leq 1$ and let $L$ be the graph Laplacian of $\Gamma$. 
Then, $\Gamma$ is a complete dominance graph if and only if
\[
\sigma(L)=\left\{d^{+}(1),d^{+}(2),\ldots,d^{+}(n)\right\}=\left\{n-1,n-2,\ldots,0\right\}.
\]
\end{corollary}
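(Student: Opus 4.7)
The plan is to reduce both directions to Theorem~\ref{thm:acyclic-spec} together with a counting argument that pins down the edge weights once a topological order of the vertices is available. The forward direction is essentially routine: a complete dominance graph is, by definition, acyclic, so Theorem~\ref{thm:acyclic-spec} immediately gives $\sigma(L)=\{d^{+}(1),\ldots,d^{+}(n)\}$. It remains to identify the multiset of out-degrees. For this I would first observe that an acyclic tournament is automatically transitive (if $i\to j$ and $j\to k$ then $k\to i$ would create a $3$-cycle, so $i\to k$), hence its vertices admit a linear order $v_{1},\ldots,v_{n}$ with $(v_{i},v_{j})\in E$ iff $i<j$, from which $d^{+}(v_{i})=n-i$ reads off directly.

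For the reverse direction, the hypothesis that $\sigma(L)$ equals the multiset of out-degrees lets me invoke Theorem~\ref{thm:acyclic-spec} to conclude that $\Gamma$ is acyclic, so by Proposition~\ref{prop:acyclic-fro} I can choose a topological order $v_{1},\ldots,v_{n}$ under which $w_{v_{j}v_{i}}=0$ whenever $j>i$. This gives the upper bound
\[
d^{+}(v_{i})=\sum_{j>i}w_{v_{i}v_{j}}\leq n-i,
\]
because each of the $n-i$ relevant weights lies in $[0,1]$. The crucial observation is a tightness argument: summing these bounds yields $\binom{n}{2}$, which also equals $\sum_{i}d^{+}(v_{i})$ by the second hypothesis, so every individual bound must be attained. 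Since $d^{+}(v_{i})=n-i$ is the sum of $n-i$ numbers in $[0,1]$, each of those numbers must equal $1$. This forces $w_{v_{i}v_{j}}=1$ for $j>i$ and $w_{v_{j}v_{i}}=0$ for $j>i$, i.e., $\Gamma$ is precisely the complete dominance graph induced by the order $v_{1},\ldots,v_{n}$.

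The step I expect to be the main obstacle is ensuring that the out-degree sequence is assigned to vertices in the \emph{correct} order along the topological ordering; the slickest way to avoid a bookkeeping mess is the global sum-of-bounds comparison just sketched, which simultaneously identifies each $d^{+}(v_{i})$ and forces the associated weights to saturate. Beyond that, the only small care needed is in the forward direction to justify that an acyclic tournament really is transitive, which is a one-line cycle argument.
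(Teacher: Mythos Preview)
Your proof is correct and follows the same high-level scheme as the paper: both directions reduce to Theorem~\ref{thm:acyclic-spec}, and then a saturation argument forces all edge weights to equal~$1$. The forward direction is essentially identical to the paper's.

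The reverse direction, however, is organized differently. The paper reorders the vertices by out-degree so that $d^{+}(i)=n-i$, and then argues sequentially: vertex~$1$ has out-degree $n-1$, so all $n-1$ outgoing weights must equal~$1$; acyclicity then forbids $(2,1)$, so the $n-2$ remaining weights from vertex~$2$ must all equal~$1$; and so on. You instead fix a topological order first, obtain the pointwise bounds $d^{+}(v_{i})\leq n-i$, and compare the sum $\sum_{i}d^{+}(v_{i})$ to $\binom{n}{2}$ to force equality everywhere at once. Your global counting argument is slicker in that it sidesteps the step-by-step bookkeeping and simultaneously identifies each $d^{+}(v_{i})$ with $n-i$, exactly the potential ``main obstacle'' you flagged. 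The paper's approach, on the other hand, makes the role of acyclicity (no $2$-cycles) more explicit at each stage. Both are short and valid; yours is marginally more elegant.
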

\begin{proof}
Suppose that $\Gamma$ is a complete dominance graph.
Then, since $\Gamma$ is acyclic, it follows from Theorem~\ref{thm:acyclic-spec} that $\sigma(L)=\left\{d^{+}(1),\ldots,d^{+}(n)\right\}$.
In addition, as a tournament graph with binary weights and no cycles, there exists a re-ordering of the vertices such that $d^{+}(i)=n-i$, for $i=1,\ldots,n$.

Conversely, suppose that
\[
\sigma(L)=\left\{d^{+}(1),d^{+}(2),\ldots,d^{+}(n)\right\}=\left\{n-1,n-2,\ldots,0\right\}.
\]
Then, by Theorem~\ref{thm:acyclic-spec}, it follows that $\Gamma$ is acyclic.
Furthermore, there is a re-ordering of the vertices such that $d^{+}(i)=n-i$, for $i=1,\ldots,n$.
Since the weights satisfy $0\leq w_{ij}\leq 1$, it follows that $(1,j)\in E$ with weight $w_{1j}=1$, for $j=2,\ldots,n$.
Similarly, since $\Gamma$ is acyclic, it follows that $(2,1)\notin E$ and $(2,j)\in E$ with weight $w_{2j}=1$, for $j=3,\ldots,n$.
Continuing in this fashion, we conclude that $\Gamma$ is a complete dominance graph.
\end{proof}

We emphasize that the eigenvalues alone are not enough to characterize a complete dominance graph.
As the following example illustrates, there exist digraphs that are not isomorphic to a complete dominance graph, yet share the same Laplacian spectrum. 

\begin{example}\label{ex:dom-eigval}
Let $\Gamma$ denote the digraph with binary weights in Figure~\ref{fig:dom-eigval}.
Let $L$ denote the graph Laplacian of $\Gamma$ and note that $\sigma(L)=\left\{2,1,0\right\}$,
which is equal to the Laplacian spectrum of any complete dominance graph with three vertices. 
However, $\Gamma$ is not a complete dominance graph since it has a cycle between the vertices $1$ and $3$.
This example does not contradict Corollary~\ref{cor:dom-eigval} since the eigenvalues of $L$ are not equal to the out-degrees of the vertices of $\Gamma$.

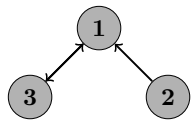
\begin{figure}[ht]
\centering
\resizebox{0.20\textwidth}{!}{
\begin{tikzpicture}
	\node[circle,draw=black,fill=black!30] (1) at (0,1) {\textbf{1}};
	\node[circle,draw=black,fill=black!30] (2) at (1,0) {\textbf{2}};
	\node[circle,draw=black,fill=black!30] (3) at (-1,0) {\textbf{3}};
	
	\draw[black,->,thick](1) to [in=45,out=225,looseness=0](3);
	\draw[black,->,thick](2) to [in=315,out=135,looseness=0](1);
	\draw[black,->,thick](3) to [in=225,out=45,looseness=0](1);
\end{tikzpicture}%
}
\caption{Digraph with binary weights on 3 vertices.}
\label{fig:dom-eigval}
\end{figure}
\hfill$\qedsymbol$
\end{example}

\section{A Rankability Measure}\label{sec:rankability}
Armed with the spectral-degree characterization (Corollary~\ref{cor:dom-eigval}) of complete dominance graphs, we propose a new rankability measure. 
In particular, given data that can be modeled as a digraph with weights $0\leq w_{ij}\leq 1$, we measure the variation in the Laplacian spectrum and the out-degrees of the vertices from their known values for a complete dominance graph. 

We measure this variation using the Hausdorff distance.
Let $A$ and $\tilde{A}$ be complex matrices with eigenvalues $\lambda_{1},\ldots,\lambda_{n}$ and $\tilde{\lambda}_{1},\ldots,\tilde{\lambda}_{n}$, respectively.
The \emph{Hausdorff distance} between the eigenvalues of $A$ and $\tilde{A}$ is defined as follows~\cite{Stewart1990}:
\[
\hd{A}{\tilde{A}} = \max\left\{\sv{A}{\tilde{A}},\sv{\tilde{A}}{A}\right\},
\]
where the so called \emph{spectral variation} is defined by
\[
\sv{A}{\tilde{A}} = \max_{i}\min_{j}\abs{\tilde{\lambda}_{i}-\lambda_{j}},~\quad~1\leq i,j\leq n.
\]

We note that, in general, the Hausdorff distance is a metric on the set of subsets of a metric space~\cite{Munkres1999}.
However, for our purposes, we only need the Hausdorff distance to measure the variation in finite multi-sets of complex numbers. 
In the algorithm below, we show how the Hausdorff distance is used to measure rankability.
We denote our rankability measure by $\specr$ since it is based on the spectral-degree characterization of complete dominance graphs.

\begin{algorithm}[ht]
\caption{Spectral-Degree Rankability of Graph Data $\Gamma$.}
\label{alg:specr}
\begin{algorithmic}
\STATE{$\func{r} = \specr\left(\Gamma\right):$}
\bindent
\STATE{$n\gets$ the number of vertices in $\Gamma$}
\STATE{$D\gets$ the out-degree matrix of $\Gamma$}
\STATE{$L\gets$ graph Laplacian of $\Gamma$}
\STATE{$S=\diag{n-1,n-2,\ldots,0}$}
\STATE{$r=1 - \frac{\hd{D}{S}+\hd{L}{S}}{2(n-1)}$}
\RETURN
\eindent
\end{algorithmic}
\end{algorithm} 

In what follows, we prove several properties of complete dominance graphs that justify the functionality of $\specr$.
In particular, we show that the Laplacian spectrum of a complete dominance graph is extremely well-conditioned; thus, for small perturbations, the corresponding Hausdorff distance is guaranteed to be small.
Then, we show that a single edge change of a complete dominance graph results in a Hausdorff distance equal to the weight of that edge.
We also show that the Hausdorff distance between the Laplacian spectrum of a complete dominance graph and any other digraph in $\mathbb{G}$ with weights $0\leq w_{ij}\leq 1$ is bounded above by $(n-1)$.
Note that this bound also holds for the out-degrees of the vertices; hence, the division by $2(n-1)$ in Algorithm~\ref{alg:specr} is a normalization factor.
The return value of $\specr$ varies between zero and one, which represents a transition between  ill-rankable and well-rankable data, respectively.
Finally, we compute the rankability of several structured digraph examples from~\cite{Anderson2019}.

\subsection{Condition Number}\label{subsec:cond-number}
By Corollary~\ref{cor:dom-eigval}, the Laplacian spectrum of the complete dominance graph is simple, i.e., the eigenvalues all have an algebraic multiplicity equal to one.
Therefore, we can make use of the following perturbation theorem~\cite[Theorem 7.1.12]{Watkins2010}.

\begin{theorem}\label{thm:watkins}
Let $A$ be an $n\times n$ complex matrix with simple eigenvalues.
Let $\lambda$ be an eigenvalue of $A$ with right and left eigenvectors $\textbf{\emph{v}}$ and $\textbf{\emph{w}}$, respectively, normalized so that $\norm{\textbf{\emph{v}}}_{2}=\norm{\textbf{\emph{w}}}_{2}=1$.
Also, let  $E$ be a small perturbation satisfying $\norm{E}_{2}=\epsilon$.
Then, there exists an eigenvalue $\lambda+\Delta\lambda$ of $A+E$ such that the condition number $\kappa(\lambda)=1/\abs{\textbf{\emph{w}}^{*}\textbf{\emph{v}}}$ satisfies
\[
\abs{\Delta\lambda}\leq\kappa(\lambda)\epsilon + O(\epsilon^{2}).
\]
\end{theorem}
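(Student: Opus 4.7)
The plan is to use standard first-order eigenvalue perturbation theory, treating the perturbation $E$ as $tE$ with $t\in[0,1]$ and expanding around $t=0$. Since $\lambda$ is a simple eigenvalue of $A$, the characteristic polynomial $p(t,\mu)=\det(A+tE-\mu I)$ satisfies $p(0,\lambda)=0$ and $\partial p/\partial\mu\,(0,\lambda)\neq 0$. The implicit function theorem therefore produces a function $\lambda(t)$, analytic in a neighborhood of $t=0$, with $\lambda(0)=\lambda$ and $\det(A+tE-\lambda(t)I)=0$. A parallel argument (or the Riesz projection onto the one-dimensional eigenspace) gives an analytic eigenvector $\textbf{v}(t)$ with $\textbf{v}(0)=\textbf{v}$ such that $(A+tE)\textbf{v}(t)=\lambda(t)\textbf{v}(t)$.

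Next, I would differentiate this eigenvalue equation at $t=0$, obtaining
\[
E\textbf{v}+A\dot{\textbf{v}}(0)=\dot{\lambda}(0)\textbf{v}+\lambda\dot{\textbf{v}}(0).
\]
Multiplying on the left by $\textbf{w}^{*}$ and using the left eigenvector identity $\textbf{w}^{*}A=\lambda\textbf{w}^{*}$ makes the terms involving $\dot{\textbf{v}}(0)$ cancel, yielding
\[
\dot{\lambda}(0)=\frac{\textbf{w}^{*}E\textbf{v}}{\textbf{w}^{*}\textbf{v}}.
\]
Note that $\textbf{w}^{*}\textbf{v}\neq 0$ because $\lambda$ is simple (otherwise $\textbf{v}$ would lie in an invariant subspace orthogonal to $\textbf{w}$, contradicting simplicity), so the quotient is well-defined.

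Finally, I would apply Cauchy--Schwarz together with the submultiplicativity of the spectral norm and the normalizations $\norm{\textbf{v}}_{2}=\norm{\textbf{w}}_{2}=1$ to estimate
\[
\abs{\textbf{w}^{*}E\textbf{v}}\leq\norm{\textbf{w}}_{2}\norm{E\textbf{v}}_{2}\leq\norm{E}_{2}=\epsilon,
\]
so that $\abs{\dot{\lambda}(0)}\leq\kappa(\lambda)\epsilon$ with $\kappa(\lambda)=1/\abs{\textbf{w}^{*}\textbf{v}}$. Taylor expanding $\lambda(t)=\lambda+t\dot{\lambda}(0)+O(t^{2})$ and evaluating at $t=1$ gives the desired bound $\abs{\Delta\lambda}\leq\kappa(\lambda)\epsilon+O(\epsilon^{2})$, where the $O(\epsilon^{2})$ term absorbs the contribution of the second- and higher-order coefficients, which are uniformly bounded on a neighborhood of $t=0$ by analyticity.

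The main obstacle is the analytic dependence of $\lambda(t)$ and $\textbf{v}(t)$ on $t$ near $0$, which is where the simplicity hypothesis enters decisively; without it, the eigenvalue can fail to be differentiable (branching behavior at defective eigenvalues). Once differentiability is in hand, the rest of the proof is a short computation using the biorthogonality of left and right eigenvectors.
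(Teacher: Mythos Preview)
The paper does not prove this theorem; it merely quotes it from Watkins' textbook~\cite[Theorem 7.1.12]{Watkins2010} and uses it as a tool. Your argument is the standard first-order perturbation derivation one finds in such references, and it is essentially correct. One small point worth tightening: with your parameterization $A+tE$ and $\norm{E}_{2}=\epsilon$ fixed, the Taylor remainder at $t=1$ is not automatically $O(\epsilon^{2})$ just from boundedness of the higher coefficients on a $t$-neighborhood---you need that the $k$th derivative $\lambda^{(k)}(0)$ involves $E$ exactly $k$ times and is therefore $O(\epsilon^{k})$. A cleaner bookkeeping is to write $E=\epsilon F$ with $\norm{F}_{2}=1$ and expand in the small parameter $s=\epsilon$ along $A+sF$; then the remainder is manifestly $O(s^{2})=O(\epsilon^{2})$.
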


In order to determine the condition number of the eigenvalues of the graph Laplacian of a complete dominance graph, we make use of the following variant of Corollary~\ref{cor:dom-eigval}.

\begin{theorem}\label{thm:dom-eigvec}
Let $\Gamma\in\mathbb{G}$ have weights $0\leq w_{ij}\leq 1$ and let $L$ be the graph Laplacian of $\Gamma$.
Then, $\Gamma$ is a complete dominance graph if and only if there exists a permutation matrix $P$ such that, for $i=1,\ldots,n$, $P\textbf{\emph{v}}_{i}$ is an eigenvector of $L$ for the eigenvalue $(n-i)$, where
\begin{equation}\label{eq:dom-eigvec}
\textbf{\emph{v}}_{i}=\sum_{k=1}^{i}\textbf{\emph{e}}_{k}
\end{equation}
and $\textbf{\emph{e}}_{k}$ is the $k$th standard basis vector of $\mathbb{R}^{n}$.
\end{theorem}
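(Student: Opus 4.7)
The plan is to prove both implications by reducing to a canonical form where the Laplacian has a known upper triangular shape, and then doing a direct matrix-vector calculation.

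For the forward direction, I would begin by using the permutation freedom: since $\Gamma$ is a complete dominance graph, after relabeling the vertices (captured by a permutation matrix $P$), we may assume the vertices are ordered so that $d^{+}(i) = n-i$ and $(i,j)\in E$ with weight $1$ exactly when $i<j$. The resulting Laplacian $\tilde{L} = P^{T}LP$ is upper triangular with $\tilde{L}_{ii} = n-i$, $\tilde{L}_{ij} = -1$ for $i<j$, and $\tilde{L}_{ij}=0$ for $i>j$. For such $\tilde{L}$, a short calculation shows that $(\tilde{L}\mathbf{v}_{i})_{j} = (n-j) - (i-j) = n-i$ when $j\le i$, and $(\tilde{L}\mathbf{v}_{i})_{j}=0$ when $j>i$; hence $\tilde{L}\mathbf{v}_{i}=(n-i)\mathbf{v}_{i}$. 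Pushing this equation back through $P$ gives $L(P\mathbf{v}_{i})=(n-i)(P\mathbf{v}_{i})$, establishing the forward direction.

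For the converse, I would again pass to $\tilde{L} = P^{T}LP$, noting that a permutation similarity of a graph Laplacian is again the Laplacian of a digraph in $\mathbb{G}$, and that being a complete dominance graph is invariant under relabeling vertices. Under this reduction, the assumption becomes $\tilde{L}\mathbf{v}_{i} = (n-i)\mathbf{v}_{i}$ for $i=1,\ldots,n$. The key observation is that $\mathbf{e}_{i} = \mathbf{v}_{i} - \mathbf{v}_{i-1}$ (setting $\mathbf{v}_{0}=\mathbf{0}$), so by linearity
\[
\tilde{L}\mathbf{e}_{i} = (n-i)\mathbf{v}_{i} - (n-i+1)\mathbf{v}_{i-1}.
\]
Reading off the $j$-th coordinate of the right-hand side yields the $i$-th column of $\tilde{L}$: $-1$ in rows $j<i$, $n-i$ in row $i$, and $0$ in rows $j>i$. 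Since off-diagonal entries of a Laplacian are $-w_{ji}$ (for the $(j,i)$ entry), this forces $w_{ji}=1$ for every $j<i$ and $w_{ji}=0$ for every $j>i$, i.e., the edge $(j,i)$ of weight $1$ is present precisely when $j<i$. Thus the relabeled graph is exactly the complete dominance graph on $\{1,\ldots,n\}$ in the standard ordering, and the original $\Gamma$ is a complete dominance graph.

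The main technical point to be careful about is bookkeeping with the permutation $P$: one must check that the similarity $L \mapsto P^{T}LP$ really does correspond to a relabeling of the vertices of $\Gamma$ (so that being a complete dominance graph transfers), and that the eigenvector hypothesis on $L$ becomes the clean statement $\tilde{L}\mathbf{v}_{i}=(n-i)\mathbf{v}_{i}$ on $\tilde{L}$. Once this is in place, the rest is the column-by-column reconstruction via $\mathbf{e}_{i}=\mathbf{v}_{i}-\mathbf{v}_{i-1}$, which is clean arithmetic and does not require invoking Corollary~\ref{cor:dom-eigval} (although one could alternatively finish the converse by combining the eigenvalue list $\{n-1,\ldots,0\}$ with an argument that the out-degrees are forced to equal the diagonal of $\tilde{L}$ and then quoting that corollary).
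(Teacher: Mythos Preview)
Your proof is correct and follows essentially the same approach as the paper: both directions pass to $\tilde{L}=P^{T}LP$, verify the eigenvector equation in the forward direction by direct computation on the upper-triangular form, and in the converse exploit $\mathbf{e}_{i}=\mathbf{v}_{i}-\mathbf{v}_{i-1}$ to read off the columns of $\tilde{L}$. The only difference is in how the converse is finished: the paper extracts just the diagonal information $d^{+}(i)=n-i$ from these columns and then invokes Corollary~\ref{cor:dom-eigval}, whereas you use the full column to reconstruct every weight $w_{ji}$ directly, which makes your argument self-contained and avoids the appeal to that corollary.
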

\begin{proof}
Suppose that $\Gamma$ is a complete dominance graph. 
Then, it follows from Corollary~\ref{cor:dom-eigval} that there exists a permutation matrix $P$ such that $P^{T}LP$ is in Frobenius normal form~\eqref{eq:fro-form} with diagonal blocks
\[
L_{1}=[n-1], L_{2}=[n-2],\ldots,L_{n}=[0]
\]
and all negative ones above the main-diagonal.
Now, for $i=1,\ldots,n$, it is readily verified that $\textbf{v}_{i}$ as defined in~\eqref{eq:dom-eigvec} is an eigenvector of $P^{T}LP$ for the eigenvalue $(n-i)$ and, therefore,
\[
LP\textbf{v}_{i}=(n-i)P\textbf{v}_{i}.
\]

Conversely, suppose that there exists a permutation matrix $P$ such that, for $i=1,\ldots,n$, $P\textbf{v}_{i}$ is an eigenvector of $L$ for the eigenvalue $(n-i)$, where $\textbf{v}_{i}$ is defined in~\eqref{eq:dom-eigvec}.
Then,
\[
P^{T}LP\textbf{e}_{1}=(n-1)\textbf{e}_{1}
\]
and it follows that, under this re-ordering of the vertices, $d^{+}(1)=(n-1)$.
Next, we have
\[
P^{T}LP(\textbf{e}_{1}+\textbf{e}_{2})=(n-2)(\textbf{e}_{1}+\textbf{e}_{2}),
\]
which implies that
\[
P^{T}LP\textbf{e}_{2}=-\textbf{e}_{1}+(n-2)\textbf{e}_{2}.
\]
Hence, under this re-ordering of the vertices, $d^{+}(2)=(n-2)$. 
Continuing in this fashion, we have a re-ordering of the vertices such that $d^{+}(i)=n-i$, for $i=1,\ldots,n$.
In addition, it follows that the eigenvalues of $L$ satisfy
\[
\sigma(L)=\left\{d^{+}(1),d^{+}(2),\ldots,d^{+}(n)\right\}=\left\{n-1,n-2,\ldots,0\right\}.
\]
Therefore, by Corollary~\ref{cor:dom-eigval}, $\Gamma$ is a complete dominance graph. 
\end{proof}

In addition, we have the following readily verified proposition. 

\begin{proposition}\label{prop:dom-left-eigvec}
Let $\Gamma\in\mathbb{G}$ be a complete dominance graph and let $L$ be the graph Laplacian of $\Gamma$.
Denote by $V$ the matrix whose $i$th column vector is $\textbf{\emph{v}}_{i}$ as defined in~\eqref{eq:dom-eigvec}. 
Then, the inverse of $V$ satisfies
\begin{equation}\label{eq:left-eigvec}
V^{-1}=\begin{bmatrix}1 & -1 & \cdots & 0 \\
		\vdots & \ddots & \ddots &  \vdots \\
		0 & \cdots & 1 & -1 \\
		0 & \cdots & 0 & 1
		\end{bmatrix}.
\end{equation}
Moreover, there exists a permutation matrix $P$ such that, for $i=1,\ldots,n$, the $i$th row of $V^{-1}$ is a left eigenvector of $P^{T}LP$ for the eigenvalue $(n-i)$.  
\end{proposition}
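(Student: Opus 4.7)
The plan has two distinct pieces, corresponding to the two claims in the proposition. The first is a direct matrix computation, and the second is a standard manipulation of the eigenvalue equation.

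For the formula~\eqref{eq:left-eigvec} for $V^{-1}$, I would simply observe that $V$ is the upper-triangular matrix with $V_{ki}=1$ for $k\leq i$ and $V_{ki}=0$ otherwise, since its $i$th column is $\mathbf{v}_i=\sum_{k=1}^{i}\mathbf{e}_k$. Letting $W$ denote the matrix on the right-hand side of~\eqref{eq:left-eigvec}, I would verify $VW=I$ by a direct entrywise check: for $i<j$, the only nonzero contributions in $\sum_{k}V_{ik}W_{kj}$ come from $k=j-1$ (giving $+1\cdot(-1)$) and $k=j$ (giving $+1\cdot 1$), which cancel; for $i=j$, only $k=j$ contributes, giving $1$; and for $i>j$, every term vanishes because $V_{ik}=0$ for $k<i$ and $W_{kj}=0$ for $k>j$. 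This establishes $V^{-1}=W$.

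For the eigenvector claim, I would invoke Theorem~\ref{thm:dom-eigvec} to obtain a permutation matrix $P$ such that, for every $i=1,\dots,n$, the vector $\mathbf{v}_i$ is a right eigenvector of $P^{T}LP$ with eigenvalue $(n-i)$. Collecting these eigenvector relations column by column yields the matrix identity
\[
(P^{T}LP)\,V = V\,D, \qquad D=\diag{n-1,n-2,\ldots,0}.
\]
Left-multiplying by $V^{-1}$ (which exists by the first part of the proof) gives $V^{-1}(P^{T}LP)=D\,V^{-1}$. Reading this row by row, the $i$th row of $V^{-1}$ is a left eigenvector of $P^{T}LP$ associated with the $i$th diagonal entry of $D$, namely $(n-i)$, as required.

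There is no real obstacle here; the only place one needs a little care is the entrywise verification of $VV^{-1}=I$, and the interchange between right-eigenvector and left-eigenvector statements via left multiplication by $V^{-1}$. Both are routine once Theorem~\ref{thm:dom-eigvec} is in hand, which is precisely why the paper states this as a ``readily verified'' proposition.
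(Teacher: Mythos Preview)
Your proposal is correct. The paper omits the proof entirely, describing the proposition as ``readily verified,'' and your argument---the entrywise check that $VW=I$ followed by the standard passage from $(P^{T}LP)V=VD$ to $V^{-1}(P^{T}LP)=DV^{-1}$---is exactly the routine verification the authors have in mind.
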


Finally, Corollary~\ref{cor:dom-eigval-cond} follows immediately from Theorem~\ref{thm:watkins}, Theorem~\ref{thm:dom-eigvec}, and Proposition~\ref{prop:dom-left-eigvec}. 

\begin{corollary}\label{cor:dom-eigval-cond}
Let $\Gamma\in\mathbb{G}$ be a complete dominance graph and let $L$ be the graph Laplacian of $\Gamma$.
For $i=1,\ldots,n$, denote the $i$th eigenvalue of $L$ by $\lambda_{i}=n-i$.
Then, the condition number of $\lambda_{i}$ satisfies
\[
\kappa(\lambda_{i})=\sqrt{2i},
\]
for $i=1,\ldots,n$. 
\end{corollary}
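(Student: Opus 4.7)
The plan is to invoke Theorem~\ref{thm:watkins} and compute $\kappa(\lambda_i) = 1/|\mathbf{w}_i^{*}\mathbf{v}_i|$ directly, using the explicit right and left eigenvectors supplied by Theorem~\ref{thm:dom-eigvec} and Proposition~\ref{prop:dom-left-eigvec}. Since the condition number of an eigenvalue is invariant under orthogonal similarity, the permutation $P$ introduced in those earlier results can be ignored: it suffices to work with the unpermuted pair $(\mathbf{v}_i, \mathbf{w}_i)$, where $\mathbf{v}_i = \sum_{k=1}^{i}\mathbf{e}_k$ is the $i$th right eigenvector and $\mathbf{w}_i^{T}$ is the $i$th row of the matrix $V^{-1}$ displayed in~\eqref{eq:left-eigvec}.

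First I would record the three numerical ingredients. From~\eqref{eq:dom-eigvec}, $\|\mathbf{v}_i\|_{2}=\sqrt{i}$. From~\eqref{eq:left-eigvec}, the $i$th row of $V^{-1}$ is $\mathbf{e}_i^{T}-\mathbf{e}_{i+1}^{T}$ (with the convention $\mathbf{e}_{n+1}=\mathbf{0}$), so $\|\mathbf{w}_i\|_{2}=\sqrt{2}$ for $i<n$. Finally, a direct computation gives
\[
\mathbf{w}_i^{T}\mathbf{v}_i \;=\; (\mathbf{e}_i-\mathbf{e}_{i+1})^{T}\sum_{k=1}^{i}\mathbf{e}_k \;=\; 1,
\]
since $\mathbf{e}_i$ appears in $\mathbf{v}_i$ but $\mathbf{e}_{i+1}$ does not. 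Normalizing the eigenvectors to unit length and plugging into Theorem~\ref{thm:watkins} then yields
\[
\kappa(\lambda_i) \;=\; \frac{\|\mathbf{v}_i\|_{2}\,\|\mathbf{w}_i\|_{2}}{|\mathbf{w}_i^{T}\mathbf{v}_i|} \;=\; \sqrt{i}\cdot\sqrt{2} \;=\; \sqrt{2i}.
\]

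The only subtlety I foresee is the boundary case $i=n$, corresponding to the zero eigenvalue: the last row of $V^{-1}$ is $\mathbf{e}_n^{T}$ rather than a difference of two basis vectors. I would either extend the formal convention $\mathbf{e}_{n+1}=\mathbf{0}$ (so the formula and the computation still go through after observing the norm is $1$ in that row and adjusting the claim to $\kappa(\lambda_n)=\sqrt{n}$), or remark that this case reduces to treating $\lambda_n=0$ separately and that Theorem~\ref{thm:watkins} applies identically. Beyond this boundary bookkeeping there is no real obstacle: everything is a one-line application of previously proved results together with two standard norm computations and one inner product.
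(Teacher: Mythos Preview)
Your approach is exactly what the paper intends: it states only that the corollary ``follows immediately from Theorem~\ref{thm:watkins}, Theorem~\ref{thm:dom-eigvec}, and Proposition~\ref{prop:dom-left-eigvec},'' and your computation of $\kappa(\lambda_i)=\|\mathbf{v}_i\|_2\|\mathbf{w}_i\|_2/|\mathbf{w}_i^{T}\mathbf{v}_i|$ from those explicit eigenvectors is precisely the implied argument. Your boundary observation is also correct and worth keeping: for $i=n$ the last row of $V^{-1}$ is $\mathbf{e}_n^{T}$ with norm $1$, giving $\kappa(\lambda_n)=\sqrt{n}$ rather than $\sqrt{2n}$, so the stated formula is slightly off at that single index (though the subsequent bound~\eqref{eq:dom-spec-bound} remains valid as an upper estimate).
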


Let $\Gamma\in\mathbb{G}$ be a complete dominance graph and let $L$ be the graph Laplacian of $\Gamma$.
Define $\tilde{L}=L+E$, where $E$ is some perturbation that satisfies $\norm{E}_{2}=\epsilon$.
Then, by Theorem~\ref{thm:watkins} and Corollary~\ref{cor:dom-eigval-cond}, for any eigenvalue $\lambda$ of $L$ there is an eigenvalue $\lambda+\Delta\lambda$ of $\tilde{L}$ such that
\begin{equation}\label{eq:dom-spec-bound}
\abs{\Delta\lambda}\leq \sqrt{2n}\epsilon + O(\epsilon^{2}).
\end{equation}
Thus, the eigenvalues of $L$ are well-conditioned, i.e., for small perturbations $E$ (and reasonably sized $n$), the Hausdorff distance between the eigenvalues of $L$ and $\tilde{L}$ is guaranteed to be small.
If, in addition, we compute the eigenvalues of $\tilde{L}$ using the QR algorithm, which is well-known to be backward stable~\cite{Stewart1973}, then we can guarantee the computed Hausdorff distance $\hd{L}{\tilde{L}}$ is small provided that $\norm{E}_{2}$ is small.

\subsection{Sharp Bounds}\label{subsec:sharp-bounds}
The bound in~\eqref{eq:dom-spec-bound} depends on an arbitrary small perturbation $E$.
However, we are particularly interested in perturbations $E$ that reflect edge changes so that $\tilde{L}=L+E$ is the graph Laplacian of $\tilde{\Gamma}\in\mathbb{G}$ with weights $0\leq w_{ij}\leq 1$.
In this section, we obtain sharp bounds on the Hausdorff distance between the Laplacian spectrum of a given digraph and a complete dominance graph. 

We also show that a single edge change of a complete dominance graph results in a Hausdorff distance equal to the weight of that edge. 
This result relies on the following relation between the determinant of a matrix and its rank-one perturbation~\cite[Lemma 1.1]{Ding2007}.

\begin{lemma}\label{lem:ding}
If $A$ is an invertible $n\times n$ complex matrix and $\textbf{\emph{x}},\textbf{\emph{y}}\in\mathbb{C}^{n}$, then
\[
\det(A+\textbf{\emph{x}}\textbf{\emph{y}}^{T})=(1+\textbf{\emph{y}}^{T}A^{-1}\textbf{\emph{x}})\det(A).
\]
\end{lemma}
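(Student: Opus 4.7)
The plan is to prove this via the standard block-matrix/Schur complement identity, which is the cleanest route to the matrix determinant lemma. Specifically, I would introduce the $(n+1)\times(n+1)$ bordered matrix
\[
M = \begin{pmatrix} A & \mathbf{x} \\ -\mathbf{y}^{T} & 1 \end{pmatrix}
\]
and compute $\det(M)$ in two different ways.

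First, since $A$ is invertible, I would apply the Schur complement formula
\[
\det\begin{pmatrix} P & Q \\ R & S \end{pmatrix} = \det(P)\,\det(S - R P^{-1} Q),
\]
taking $P = A$, $Q = \mathbf{x}$, $R = -\mathbf{y}^{T}$, $S = 1$. This yields
\[
\det(M) = \det(A)\bigl(1 + \mathbf{y}^{T} A^{-1}\mathbf{x}\bigr).
\]
Next, I would apply the dual Schur complement formula with respect to the $1\times 1$ block in the lower-right (which is trivially invertible): $\det(M) = \det(S)\,\det(P - Q S^{-1} R)$. This gives
\[
\det(M) = 1\cdot \det\bigl(A - \mathbf{x}\cdot 1\cdot(-\mathbf{y}^{T})\bigr) = \det(A + \mathbf{x}\mathbf{y}^{T}).
\]
Equating the two expressions for $\det(M)$ produces the identity claimed in Lemma~\ref{lem:ding}.

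This argument has essentially no obstacle beyond bookkeeping: the only point requiring any care is the sign in the bordering. I chose the entry $-\mathbf{y}^{T}$ (rather than $+\mathbf{y}^{T}$) precisely so that the Schur complement based on $A$ produces $1-(-\mathbf{y}^{T})A^{-1}\mathbf{x} = 1+\mathbf{y}^{T}A^{-1}\mathbf{x}$ with the correct sign, while the dual Schur complement simultaneously produces $A - \mathbf{x}(-\mathbf{y}^{T}) = A+\mathbf{x}\mathbf{y}^{T}$. Once the bordering is chosen consistently, the two determinant evaluations immediately match and the lemma follows. An alternative route would be to factor $A+\mathbf{x}\mathbf{y}^{T} = A(I + A^{-1}\mathbf{x}\mathbf{y}^{T})$ and compute $\det(I + \mathbf{u}\mathbf{y}^{T})$ directly from its eigenstructure (one eigenvalue $1+\mathbf{y}^{T}\mathbf{u}$ and eigenvalue $1$ on the hyperplane $\mathbf{y}^{\perp}$), but this requires a small case split when $A^{-1}\mathbf{x}=0$, so the bordered-matrix argument is preferable.
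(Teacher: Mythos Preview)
Your argument is correct: the bordered-matrix/Schur-complement computation is one of the standard clean proofs of the matrix determinant lemma, and the sign bookkeeping is handled properly. Note, however, that the paper does not actually prove this lemma; it simply quotes it from \cite[Lemma~1.1]{Ding2007} as a known tool, so there is no in-paper proof to compare against. Your write-up therefore supplies more than the paper does here, and either of the two routes you mention (the block-matrix identity or the factorization $A+\mathbf{x}\mathbf{y}^{T}=A(I+A^{-1}\mathbf{x}\mathbf{y}^{T})$ followed by an eigenvalue count) would be an acceptable self-contained justification.
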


Moreover, we have the following extension of~\cite[Theorem 2.1]{Ding2007}.
 
\begin{theorem}\label{thm:ding-var}
Let $A$ be a $n\times n$ complex matrix with eigenvalues
\[
\sigma(A)=\left\{\lambda_{1},\lambda_{2},\ldots,\lambda_{n}\right\}.
\]
Furthermore, let $\textbf{\emph{w}}_{1},\ldots,\textbf{\emph{w}}_{k}$ be left eigenvectors for the eigenvalues $\lambda_{1},\ldots,\lambda_{k}$, respectively.
If $\textbf{\emph{x}}\in\mathbb{C}^{n}$ is perpendicular to $\textbf{\emph{w}}_{2},\ldots,\textbf{\emph{w}}_{k}$, then
\[
\sigma(A+\textbf{\emph{x}}(\textbf{\emph{w}}_{1}+\cdots+\textbf{\emph{w}}_{k})^{*})=\left\{\lambda_{1}+\textbf{\emph{w}}_{1}^{*}\textbf{\emph{x}},\lambda_{2},\ldots,\lambda_{n}\right\}.
\]
\end{theorem}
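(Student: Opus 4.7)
The plan is to compute the characteristic polynomial of $B := A + \mathbf{x}(\mathbf{w}_1+\cdots+\mathbf{w}_k)^*$ directly via Lemma~\ref{lem:ding}, using the rank-one structure of the perturbation together with the hypothesis that $\mathbf{x}$ is orthogonal to $\mathbf{w}_2,\ldots,\mathbf{w}_k$.

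First, fix $\lambda \in \mathbb{C}$ not in $\sigma(A)$, so that $A-\lambda I$ is invertible. Writing the perturbation as $\mathbf{x}\mathbf{y}^T$ with $\mathbf{y}^T = (\mathbf{w}_1+\cdots+\mathbf{w}_k)^*$, Lemma~\ref{lem:ding} yields
\[
\det(B - \lambda I) = \bigl(1 + (\mathbf{w}_1+\cdots+\mathbf{w}_k)^*(A-\lambda I)^{-1}\mathbf{x}\bigr)\det(A-\lambda I).
\]
Next, I would exploit the left-eigenvector identity $\mathbf{w}_j^*(A-\lambda I) = (\lambda_j-\lambda)\mathbf{w}_j^*$, which rearranges to $\mathbf{w}_j^*(A-\lambda I)^{-1} = (\lambda_j-\lambda)^{-1}\mathbf{w}_j^*$. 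Applied termwise,
\[
(\mathbf{w}_1+\cdots+\mathbf{w}_k)^*(A-\lambda I)^{-1}\mathbf{x} = \sum_{j=1}^{k}\frac{\mathbf{w}_j^*\mathbf{x}}{\lambda_j - \lambda}.
\]
Because $\mathbf{x}\perp \mathbf{w}_j$ for $j=2,\ldots,k$, all but the first summand vanish, leaving $(\lambda_1-\lambda)^{-1}\mathbf{w}_1^*\mathbf{x}$. Substituting this back and using $\det(A-\lambda I) = \prod_{i=1}^n (\lambda_i-\lambda)$ gives
\[
\det(B - \lambda I) = \bigl(\lambda_1 + \mathbf{w}_1^*\mathbf{x} - \lambda\bigr)\prod_{i=2}^{n}(\lambda_i - \lambda).
\]

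The identity above is a polynomial equality in $\lambda$ that holds off a finite set, hence everywhere, so reading off the roots establishes the claimed spectrum. The main subtlety—and what I would single out as the one place requiring care—is the passage from the formula to a statement about the full multiset of eigenvalues: if the $\lambda_i$ are not all distinct, one needs to be sure that multiplicities are preserved, but this is handled automatically by comparing the two characteristic polynomials as polynomials. One might also prefer a direct sanity check that $\lambda_2,\ldots,\lambda_k$ persist as eigenvalues of $B$: for $j\ge 2$, $\mathbf{w}_j^*B = \mathbf{w}_j^*A + (\mathbf{w}_j^*\mathbf{x})(\mathbf{w}_1+\cdots+\mathbf{w}_k)^* = \lambda_j\mathbf{w}_j^*$, confirming consistency with the polynomial computation.
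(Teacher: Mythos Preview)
Your proof is correct and follows essentially the same approach as the paper: apply the matrix determinant lemma (Lemma~\ref{lem:ding}) to the rank-one update, use the left-eigenvector identity to simplify the resolvent term, invoke orthogonality to kill all but the $j=1$ summand, and read off the characteristic polynomial. Your version is slightly more careful in explicitly noting the polynomial-identity extension to all $\lambda$ and in checking multiplicities, but the core argument is identical.
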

\begin{proof}
Let $\lambda$ be any complex number that is not an eigenvalue of $A$, and let $\textbf{w}=\textbf{w}_{1}+\cdots+\textbf{w}_{k}$. 
Then, by applying Lemma~\ref{lem:ding} to the equality
\[
\lambda I - (A+\textbf{x}\textbf{w}^{*})=(\lambda I-A)-\textbf{x}\textbf{w}^{*},
\]
we have 
\[
\det(\lambda I-(A+\textbf{x}\textbf{w}^{*})) = (1-\textbf{w}^{*}(\lambda I-A)^{-1}\textbf{x})\det(\lambda I - A).
\]

For $i=1,\ldots,k$, $\textbf{w}_{i}$ is a left eigenvector for the eigenvalue $\lambda_{i}$, and it follows that
\[
\textbf{w}_{i}^{*}(\lambda I - A)^{-1} = \frac{1}{\lambda-\lambda_{i}}\textbf{w}_{i}^{*}.
\]
Therefore, we have
\begin{align*}
1-\textbf{w}^{*}(\lambda I-A)^{-1}\textbf{x} &= 1 - \frac{\textbf{w}_{1}^{*}\textbf{x}}{\lambda-\lambda_{1}} - \cdots - \frac{\textbf{w}_{k}^{*}\textbf{x}}{\lambda-\lambda_{k}} \\
&= \frac{(\lambda-\lambda_{1})-\textbf{w}_{1}^{*}\textbf{x}}{\lambda-\lambda_{1}}
\end{align*}
since $\textbf{x}$ is perpendicular to $\textbf{w}_{2},\ldots,\textbf{w}_{k}$.
Hence,
\[
\det( \lambda I-(A+\textbf{x}\textbf{w}^{*})) = (\lambda - (\lambda_{1}+\textbf{w}_{1}^{*}\textbf{x}))(\lambda-\lambda_{2})\cdots(\lambda-\lambda_{n}),
\]
and the result follows.
\end{proof}

We are now ready to prove that a single edge change of a complete dominance graph results in a Hausdorff distance equal to the weight of that edge.

\begin{theorem}\label{thm:haus-pert-edge}
Let $\Gamma\in\mathbb{G}$ be a complete dominance graph and let $L$ be the graph Laplacian of $\Gamma$.
Let $\tilde{\Gamma}$ be obtained from $\Gamma$ by adding or deleting an edge with weight $w$ and let $\tilde{L}$ be the graph Laplacian of $\tilde{\Gamma}$.
Then,
\[
\hd{L}{\tilde{L}}=w.
\]
\end{theorem}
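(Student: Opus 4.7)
The plan is to use Theorem~\ref{thm:ding-var} to determine $\sigma(\tilde L)$ exactly and then read off the Hausdorff distance. By Corollary~\ref{cor:dom-eigval} and the proof of Theorem~\ref{thm:dom-eigvec}, after an appropriate relabeling of the vertices we may assume that $L$ is upper triangular with diagonal entries $n-1, n-2, \ldots, 0$ and $-1$ in every strictly upper-triangular position. Proposition~\ref{prop:dom-left-eigvec} then supplies explicit left eigenvectors: letting $\textbf{\emph{w}}_k = \textbf{\emph{e}}_k - \textbf{\emph{e}}_{k+1}$ for $1 \leq k \leq n-1$ and $\textbf{\emph{w}}_n = \textbf{\emph{e}}_n$, the vector $\textbf{\emph{w}}_k$ is a left eigenvector of $L$ for the eigenvalue $n-k$.

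Next I would identify the perturbation $E = \tilde L - L$ as a rank-one matrix. If the modified edge is $(a,b)$ with $a < b$ (deletion of weight $w$ from a weight-one edge), then $E = w\,\textbf{\emph{e}}_a(\textbf{\emph{e}}_b - \textbf{\emph{e}}_a)^{\top}$; if instead it is $(a,b)$ with $a > b$ (addition of a new edge of weight $w$), then $E = w\,\textbf{\emph{e}}_a(\textbf{\emph{e}}_a - \textbf{\emph{e}}_b)^{\top}$. In either case a telescoping identity rewrites the row factor as a signed sum of consecutive $\textbf{\emph{w}}_k^{\top}$'s, so $E$ fits the template of Theorem~\ref{thm:ding-var} with $\textbf{\emph{x}} = \pm w\,\textbf{\emph{e}}_a$.

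The delicate step, which I expect to be the main obstacle, is ordering the summands in Theorem~\ref{thm:ding-var} so that $\textbf{\emph{x}}$ is perpendicular to all but the first. Since $\textbf{\emph{e}}_a^{\top}\textbf{\emph{w}}_k = \delta_{a,k} - \delta_{a,k+1}$, the vector $\textbf{\emph{e}}_a$ is orthogonal to every $\textbf{\emph{w}}_k$ except $\textbf{\emph{w}}_{a-1}$ and $\textbf{\emph{w}}_a$. This forces the correct choice of distinguished index: $\textbf{\emph{w}}_a$ in the deletion case and $\textbf{\emph{w}}_{a-1}$ in the addition case. Once this bookkeeping is in place, Theorem~\ref{thm:ding-var} yields $\sigma(\tilde L)$ from $\sigma(L)$ by shifting a single eigenvalue: in the deletion case $n-a$ becomes $n-a-w$, and in the addition case $n-a+1$ becomes $n-a+1+w$.

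The Hausdorff computation is then immediate. Since $w \in [0,1]$, the shifted eigenvalue remains within the unit interval between two consecutive integers of $\sigma(L) = \{0, 1, \ldots, n-1\}$, so its closest neighbor in $\sigma(L)$ is at distance $\min(w, 1-w)$, while the ``missing'' eigenvalue of $L$ has closest neighbor $\mu \pm w$ in $\sigma(\tilde L)$ at distance $w$. All other eigenvalues are unchanged and contribute distance zero. Therefore the two spectral variations are $w$ and $\min(w, 1-w) \leq w$, so $\hd{L}{\tilde L} = w$, as claimed.
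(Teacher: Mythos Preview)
Your proof is correct and follows essentially the same route as the paper: both express the edge change as a rank-one update and invoke Theorem~\ref{thm:ding-var} to show that exactly one eigenvalue moves by $w$. The paper handles deletions more cheaply by observing that $\tilde\Gamma$ stays acyclic and citing Theorem~\ref{thm:acyclic-spec} directly, reserving Theorem~\ref{thm:ding-var} for back-edge additions, whereas you treat both cases uniformly; you are also more explicit than the paper's ``and the result follows'' about the final Hausdorff computation. One minor caveat: in the addition case with $a=2$ the shifted eigenvalue $n-1+w$ lies above all of $\sigma(L)$ rather than between two consecutive integers, so its nearest neighbour is at distance $w$, not $\min(w,1-w)$---but since the other spectral variation is exactly $w$, this does not affect your conclusion.
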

\begin{proof}
Note that if $\tilde{\Gamma}$ is acyclic, then the result follows readily from Theorem~\ref{thm:acyclic-spec}.
Thus, we only need to consider edge additions that result in a cycle. 

Without loss of generality, we assume that $L$ is in its Frobenius normal form.
The eigenvalues of $L$ satisfy
\[
\sigma(L)=\left\{n-1,n-2,\ldots,0\right\},
\]
with associated left eigenvectors $\textbf{w}_{1},\textbf{w}_{2},\ldots,\textbf{w}_{n}$ that correspond to the rows of $S^{-1}$ in~\eqref{eq:left-eigvec}.

For $i=2,\ldots,n$ and $j=1,\ldots,i-1$, the edge $(i,j)$ with weight $w_{ij}$ can be added by the rank-one perturbation
\[
w_{ij}\textbf{e}_{i}(\textbf{w}_{j}+\cdots+\textbf{w}_{i-1})^{*}
\]
Note that $\textbf{e}_{i}$ is perpendicular to $\textbf{w}_{k}$ for $k=j,\ldots,i-2$.
Therefore, Theorem~\ref{thm:ding-var} applies to the perturbed dominance graph Laplacian
\[
\tilde{L}=L+w_{ij}\textbf{e}_{i}(\textbf{w}_{j}+\cdots+\textbf{w}_{i-1})^{*}.
\]
Since $\textbf{w}_{i-1}^{*}\textbf{e}_{i}=1$, we have
\[
\sigma(\tilde{L})=\left\{n-1,\ldots,n-(i-1)+w_{ij},\ldots,0\right\},
\]
and the result follows.
\end{proof}

Note that the result in Theorem~\ref{thm:haus-pert-edge} can also be viewed in the context of changing the weight of an existing edge. 
Finally, we prove a sharp upper bound on the Hausdorff distance.
To this end, we make use of the following famous result due to Ger{\v s}gorin~\cite{Gershgorin1931}.

\begin{theorem}\label{thm:gersh}
Let $A=[a_{ij}]_{i,j=1}^{n}$ be a complex matrix and let
\[
R_{i}(A)=\sum_{j\neq i}\abs{a_{ij}},~\quad~i=1,\ldots,n.
\]
For $i=1,\ldots,n$, define the $i$th Ger{\v s}gorin disk by
\[
G_{i}(A) = \left\{z\in\mathbb{C}\colon\abs{z-a_{ii}}\leq R_{i}(A)\right\}.
\]
Then, the eigenvalues of $A$ are in the union of the Ger{\v s}gorin disks:
\[
G(A)=\bigcup_{i=1}^{n}G_{i}(A). 
\]
\end{theorem}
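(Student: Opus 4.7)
The plan is to prove Ger\v{s}gorin's disk theorem by the standard largest-coordinate argument on an eigenvector. The idea is to fix an arbitrary eigenvalue, pair it with an eigenvector, and then extract information from the single row of the eigenvalue equation that corresponds to the largest-magnitude entry of the eigenvector.

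First, I would let $\lambda$ be any eigenvalue of $A$ and fix a nonzero eigenvector $\mathbf{x}=(x_{1},\ldots,x_{n})^{T}$ with $A\mathbf{x}=\lambda\mathbf{x}$. Since $\mathbf{x}\neq 0$, there is an index $i$ such that $\abs{x_{i}}=\max_{j}\abs{x_{j}}>0$. The $i$th coordinate of $A\mathbf{x}=\lambda\mathbf{x}$ reads $\sum_{j=1}^{n}a_{ij}x_{j}=\lambda x_{i}$, which I would rearrange as
\[
(\lambda-a_{ii})\,x_{i}=\sum_{j\neq i}a_{ij}x_{j}.
\]

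Next, I would take absolute values, apply the triangle inequality, and divide both sides by $\abs{x_{i}}>0$ to get
\[
\abs{\lambda-a_{ii}}\leq\sum_{j\neq i}\abs{a_{ij}}\cdot\frac{\abs{x_{j}}}{\abs{x_{i}}}\leq\sum_{j\neq i}\abs{a_{ij}}=R_{i}(A),
\]
where the second inequality uses $\abs{x_{j}}/\abs{x_{i}}\leq 1$ for every $j$ by the maximal choice of $i$. This inequality is exactly the statement $\lambda\in G_{i}(A)\subseteq G(A)$, and since $\lambda$ was an arbitrary eigenvalue, the theorem follows.

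There is no genuine obstacle in this argument; the only subtle point is the simultaneous role played by selecting $i$ to be a coordinate of maximum absolute value: it both guarantees that dividing by $\abs{x_{i}}$ is legitimate and collapses the right-hand side to the single row sum $R_{i}(A)$ rather than some weighted combination over all rows. Once that choice is made, the bound is pure triangle inequality, and there is nothing else to verify.
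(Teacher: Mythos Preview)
Your proof is correct and is the classical largest-coordinate argument for Ger\v{s}gorin's theorem. The paper does not actually supply a proof of this statement; it quotes the theorem as a well-known result of Ger\v{s}gorin with a citation to the original 1931 paper, so there is no in-paper proof to compare against.
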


\begin{theorem}\label{thm:haus-upper-bound}
Let $\Gamma\in\mathbb{G}$ be a complete dominance graph and let $\tilde{\Gamma}\in\mathbb{G}$ be any other graph with weights $0\leq \tilde{w}_{ij}\leq 1$. 
Denote by $L$ and $\tilde{L}$ the graph Laplacian of $\Gamma$ and $\tilde{\Gamma}$, respectively.
Then,
\[
\hd{L}{\tilde{L}}\leq (n-1).
\]
\end{theorem}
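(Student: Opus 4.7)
The plan is to bound each of the two one-sided spectral variations $\sv{L}{\tilde{L}}$ and $\sv{\tilde{L}}{L}$ separately by $n-1$, so that their maximum, namely $\hd{L}{\tilde{L}}$, is also bounded by $n-1$.

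For $\sv{\tilde{L}}{L}\leq n-1$, I would use Proposition~\ref{prop:basic-spectrum}(i), which guarantees $0\in\sigma(\tilde{L})$. Since Corollary~\ref{cor:dom-eigval} identifies $\sigma(L)=\{0,1,\ldots,n-1\}$, the zero eigenvalue of $\tilde{L}$ serves as a universal witness: for every $\lambda\in\sigma(L)$, $\min_{\tilde{\mu}\in\sigma(\tilde{L})}|\lambda-\tilde{\mu}|\leq|\lambda-0|=\lambda\leq n-1$.

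The heart of the argument is the bound $\sv{L}{\tilde{L}}\leq n-1$, which I would obtain from Ger\v{s}gorin's theorem (Theorem~\ref{thm:gersh}). The $i$th diagonal entry of $\tilde{L}$ is $d^{+}(i)$, and because $\tilde{\Gamma}$ is simple with weights in $[0,1]$, the $i$th off-diagonal absolute row sum equals $\sum_{j\neq i}\tilde{w}_{ij}=d^{+}(i)\in[0,n-1]$. Hence each Ger\v{s}gorin disk is $G_{i}(\tilde{L})=\{z\in\mathbb{C}:|z-d^{+}(i)|\leq d^{+}(i)\}$, a disk tangent to the imaginary axis at the origin. The crucial observation is that every such disk is contained in the \emph{single} disk of radius $n-1$ centered at $n-1$: the condition $|z-d^{+}(i)|\leq d^{+}(i)$ expands to $|z|^{2}\leq 2\,d^{+}(i)\,\re{z}$, and since $d^{+}(i)\leq n-1$ (so in particular $\re{z}\geq 0$) this forces $|z|^{2}\leq 2(n-1)\,\re{z}$, i.e.\ $|z-(n-1)|\leq n-1$.

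Putting the two directions together, every eigenvalue $\tilde{\mu}$ of $\tilde{L}$ satisfies $|\tilde{\mu}-(n-1)|\leq n-1$, so the eigenvalue $n-1\in\sigma(L)$ is also a universal witness and $\sv{L}{\tilde{L}}\leq n-1$. The main conceptual step I anticipate is realizing that the Ger\v{s}gorin disks, rather than needing to be matched individually to the integers in $\{0,1,\ldots,n-1\}$ (which a naive triangle inequality only yields up to $n-\tfrac{1}{2}$), all nest inside a single extremal disk, so that one element of $\sigma(L)$ already suffices as a witness; after that observation the algebra is a one-line completion of the square.
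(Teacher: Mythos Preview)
Your proof is correct and rests on the same Ger\v{s}gorin-disk idea as the paper, with two differences worth recording. For $\sv{L}{\tilde{L}}$ the paper nests each Ger\v{s}gorin disk $G_i(\tilde L)$ inside the disk of radius $k=\lceil \tilde d^{+}(i)\rceil$ centred at the eigenvalue $k\in\sigma(L)$, whereas you nest \emph{all} of them into the single extremal disk $\{z:|z-(n-1)|\le n-1\}$; the two arguments are equivalent in spirit, and in fact your single-disk containment is exactly what the paper's second paragraph proves. For $\sv{\tilde{L}}{L}$ your argument is genuinely different and, in fact, cleaner: you invoke $0\in\sigma(\tilde L)$ from Proposition~\ref{prop:basic-spectrum}(i) as a universal witness for every $\lambda\in\{0,1,\ldots,n-1\}$, which settles that direction immediately. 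The paper's treatment of this direction instead observes that $\sigma(\tilde L)$ lies in the disk about $n-1$ of radius $n-1$ and that $n-1\in\sigma(L)$---but as written that statement bounds $\sv{L}{\tilde{L}}$ again rather than $\sv{\tilde{L}}{L}$, so your zero-eigenvalue argument is the correct way to close that half of the Hausdorff bound.
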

\begin{proof}

Let $\tilde{d}^{+}(i)$ denote the out-degree of the $i$th vertex of $\tilde{L}$.
Then, the $i$th Ger{\v s}gorin disk of $\tilde{L}$ satisfies
\[
G_{i}(\tilde{L})=\left\{z\in\mathbb{C}\colon\abs{z-\tilde{d}^{+}(i)}\leq \tilde{d}^{+}(i)\right\}.
\]
Since the weights of $\tilde{\Gamma}$ satisfy $0\leq\tilde{w}_{ij}\leq 1$, it follows that $k=\ceil{\tilde{d}^{+}(i)}$ is an integer between $1$ and $(n-1)$.
Then, by Corollary~\ref{cor:dom-eigval}, $k$ is the out-degree of a vertex of $\Gamma$ and is an eigenvalue of $L$. 
Also, the disk
\[
D_{k}(k)=\left\{z\in\mathbb{C}\colon\abs{z-k}\leq k\right\}
\]
contains the $i$th Ger{\v s}gorin disk of $\tilde{L}$. 
Therefore, $\sv{L}{\tilde{L}}\leq k\leq (n-1)$. 

In addition, the eigenvalues of $\tilde{L}$ are guaranteed to be contained in the disk centered at $(n-1)$ with radius $(n-1)$ since $\tilde{d}^{+}(i)\leq(n-1)$, for $i=1,\ldots,n$. 
By Corollary~\ref{cor:dom-eigval}, $(n-1)$ is an eigenvalue of $L$ and it follows that $\sv{\tilde{L}}{L}\leq(n-1)$.
Since $\hd{L}{\tilde{L}}$ is the maximum of $\sv{L}{\tilde{L}}$ and $\sv{\tilde{L}}{L}$, the result follows. 
\end{proof}

Note that the upper bound in Theorem~\ref{thm:haus-upper-bound} is attained for the empty graph.

\subsection{Examples}\label{subsec:examples}
In this section, we consider several structured digraph examples from~\cite{Anderson2019}.
These examples are used to illustrate whether or not a rankability measure matches our intuition of well-rankable and ill-rankable graph data. 
For each digraph, we compare $\specr$ with the rankability measure $\edger$ as defined in~\eqref{eq:simod-rank}.
The results are displayed in Figure~\ref{fig:simod-examples}, and the digraphs are ordered from most rankable to least rankable as determined by the measure $\specr$.

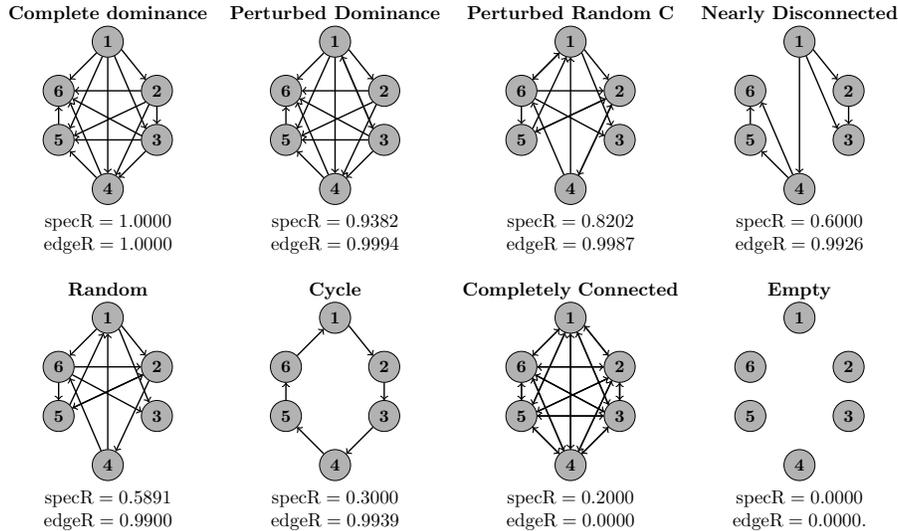
\begin{figure}[ht]
\centering
\resizebox{1.0\textwidth}{!}{
\begin{tabular}{cccc}
\textbf{Complete dominance} & \textbf{Perturbed Dominance} & \textbf{Perturbed Random C} & \textbf{Nearly Disconnected} \\
\resizebox{0.2\textwidth}{!}{
\begin{tikzpicture}
	\node[circle,draw=black,fill=black!30] (1) at (0,2) {\textbf{1}};
	\node[circle,draw=black,fill=black!30] (2) at (1,1) {\textbf{2}};
	\node[circle,draw=black,fill=black!30] (3) at (1,0) {\textbf{3}};
	\node[circle,draw=black,fill=black!30] (4) at (0,-1) {\textbf{4}};
	\node[circle,draw=black,fill=black!30] (5) at (-1,0) {\textbf{5}};
	\node[circle,draw=black,fill=black!30] (6) at (-1,1) {\textbf{6}};
	
	\draw[black,->,thick](1) to [out=330,in=135,looseness=0](2);
	\draw[black,->,thick](1) to [out=295,in=135,looseness=0](3);
	\draw[black,->,thick](1) to [out=270,in=90,looseness=0](4);
	\draw[black,->,thick](1) to [out=250,in=45,looseness=0](5);
	\draw[black,->,thick](1) to [out=225,in=45,looseness=0](6);
	\draw[black,->,thick](2) to [out=270,in=90,looseness=0](3);
	\draw[black,->,thick](2) to [out=240,in=60,looseness=0](4);
	\draw[black,->,thick](2) to [out=225,in=15,looseness=0](5);
	\draw[black,->,thick](2) to [out=180,in=0,looseness=0](6);
	\draw[black,->,thick](3) to [out=225,in=45,looseness=0](4);
	\draw[black,->,thick](3) to [out=180,in=0,looseness=0](5);
	\draw[black,->,thick](3) to [out=165,in=330,looseness=0](6);
	\draw[black,->,thick](4) to [out=135,in=315,looseness=0](5);
	\draw[black,->,thick](4) to [out=115,in=315,looseness=0](6);
	\draw[black,->,thick](5) to [out=90,in=270,looseness=0](6);
\end{tikzpicture}%
}\hfill
&
\resizebox{0.2\textwidth}{!}{
\begin{tikzpicture}
	\node[circle,draw=black,fill=black!30] (1) at (0,2) {\textbf{1}};
	\node[circle,draw=black,fill=black!30] (2) at (1,1) {\textbf{2}};
	\node[circle,draw=black,fill=black!30] (3) at (1,0) {\textbf{3}};
	\node[circle,draw=black,fill=black!30] (4) at (0,-1) {\textbf{4}};
	\node[circle,draw=black,fill=black!30] (5) at (-1,0) {\textbf{5}};
	\node[circle,draw=black,fill=black!30] (6) at (-1,1) {\textbf{6}};
	
	\draw[black,->,thick](1) to [out=330,in=135,looseness=0](2);
	\draw[black,->,thick](1) to [out=295,in=135,looseness=0](3);
	\draw[black,->,thick](1) to [out=270,in=90,looseness=0](4);
	\draw[black,->,thick](1) to [out=250,in=45,looseness=0](5);
	\draw[black,->,thick](1) to [out=225,in=45,looseness=0](6);
	\draw[black,->,thick](2) to [out=240,in=60,looseness=0](4);
	\draw[black,->,thick](2) to [out=225,in=15,looseness=0](5);
	\draw[black,->,thick](2) to [out=180,in=0,looseness=0](6);
	\draw[black,->,thick](3) to [out=135,in=295,looseness=0](1);
	\draw[black,->,thick](3) to [out=225,in=45,looseness=0](4);
	\draw[black,->,thick](3) to [out=180,in=0,looseness=0](5);
	\draw[black,->,thick](3) to [out=165,in=330,looseness=0](6);
	\draw[black,->,thick](4) to [out=135,in=315,looseness=0](5);
	\draw[black,->,thick](4) to [out=115,in=315,looseness=0](6);
	\draw[black,->,thick](5) to [out=90,in=270,looseness=0](6);
\end{tikzpicture}%
}\hfill
&
\resizebox{0.2\textwidth}{!}{
\begin{tikzpicture}
	\node[circle,draw=black,fill=black!30] (1) at (0,2) {\textbf{1}};
	\node[circle,draw=black,fill=black!30] (2) at (1,1) {\textbf{2}};
	\node[circle,draw=black,fill=black!30] (3) at (1,0) {\textbf{3}};
	\node[circle,draw=black,fill=black!30] (4) at (0,-1) {\textbf{4}};
	\node[circle,draw=black,fill=black!30] (5) at (-1,0) {\textbf{5}};
	\node[circle,draw=black,fill=black!30] (6) at (-1,1) {\textbf{6}};
	
	\draw[black,->,thick](1) to [out=330,in=135,looseness=0](2);
	\draw[black,->,thick](1) to [out=315,in=120,looseness=0](3);
	\draw[black,->,thick](1) to [out=225,in=45,looseness=0](6);
	\draw[black,->,thick](2) to [out=240,in=60,looseness=0](4);
	\draw[black,->,thick](2) to [out=210,in=30,looseness=0](5);
	\draw[black,->,thick](4) to [out=90,in=270,looseness=0](1);
	\draw[black,->,thick](4) to [out=60,in=240,looseness=0](2);
	\draw[black,->,thick](4) to [out=115,in=315,looseness=0](6);
	\draw[black,->,thick](5) to [out=45,in=255,looseness=0](1);
	\draw[black,->,thick](5) to [out=30,in=210,looseness=0](2);
	\draw[black,->,thick](6) to [out=45,in=225,looseness=0](1);
	\draw[black,->,thick](6) to [out=0,in=180,looseness=0](2);
	\draw[black,->,thick](6) to [out=330,in=165,looseness=0](3);
	\draw[black,->,thick](6) to [out=270,in=90,looseness=0](5);
\end{tikzpicture}%
}\hfill
&
\resizebox{0.2\textwidth}{!}{
\begin{tikzpicture}
	\node[circle,draw=black,fill=black!30] (1) at (0,2) {\textbf{1}};
	\node[circle,draw=black,fill=black!30] (2) at (1,1) {\textbf{2}};
	\node[circle,draw=black,fill=black!30] (3) at (1,0) {\textbf{3}};
	\node[circle,draw=black,fill=black!30] (4) at (0,-1) {\textbf{4}};
	\node[circle,draw=black,fill=black!30] (5) at (-1,0) {\textbf{5}};
	\node[circle,draw=black,fill=black!30] (6) at (-1,1) {\textbf{6}};
	
	\draw[black,->,thick](1) to [out=330,in=135,looseness=0](2);
	\draw[black,->,thick](1) to [out=295,in=135,looseness=0](3);
	\draw[black,->,thick](1) to [out=270,in=90,looseness=0](4);
	\draw[black,->,thick](2) to [out=270,in=90,looseness=0](3);
	\draw[black,->,thick](4) to [out=135,in=315,looseness=0](5);
	\draw[black,->,thick](4) to [out=115,in=315,looseness=0](6);
	\draw[black,->,thick](5) to [out=90,in=270,looseness=0](6);
\end{tikzpicture}%
}\\
$\specr = 1.0000$ & $\specr = 0.9382$ & $\specr = 0.8202$ & $\specr = 0.6000$ \\
$\edger = 1.0000$ & $\edger = 0.9994$ & $\edger = 0.9987$ & $\edger = 0.9926$ \\
& & & \\
\textbf{Random} & \textbf{Cycle} & \textbf{Completely Connected} & \textbf{Empty} \\
\resizebox{0.2\textwidth}{!}{
\begin{tikzpicture}
	\node[circle,draw=black,fill=black!30] (1) at (0,2) {\textbf{1}};
	\node[circle,draw=black,fill=black!30] (2) at (1,1) {\textbf{2}};
	\node[circle,draw=black,fill=black!30] (3) at (1,0) {\textbf{3}};
	\node[circle,draw=black,fill=black!30] (4) at (0,-1) {\textbf{4}};
	\node[circle,draw=black,fill=black!30] (5) at (-1,0) {\textbf{5}};
	\node[circle,draw=black,fill=black!30] (6) at (-1,1) {\textbf{6}};
	
	\draw[black,->,thick](1) to [out=330,in=135,looseness=0](2);
	\draw[black,->,thick](1) to [out=315,in=120,looseness=0](3);
	\draw[black,->,thick](1) to [out=225,in=45,looseness=0](6);
	\draw[black,->,thick](2) to [out=240,in=60,looseness=0](4);
	\draw[black,->,thick](2) to [out=210,in=30,looseness=0](5);
	\draw[black,->,thick](4) to [out=90,in=270,looseness=0](1);
	\draw[black,->,thick](4) to [out=115,in=315,looseness=0](6);
	\draw[black,->,thick](5) to [out=45,in=255,looseness=0](1);
	\draw[black,->,thick](5) to [out=30,in=210,looseness=0](2);
	\draw[black,->,thick](6) to [out=0,in=180,looseness=0](2);
	\draw[black,->,thick](6) to [out=330,in=165,looseness=0](3);
	\draw[black,->,thick](6) to [out=270,in=90,looseness=0](5);
\end{tikzpicture}%
}\hfill
&
\resizebox{0.2\textwidth}{!}{
\begin{tikzpicture}
	\node[circle,draw=black,fill=black!30] (1) at (0,2) {\textbf{1}};
	\node[circle,draw=black,fill=black!30] (2) at (1,1) {\textbf{2}};
	\node[circle,draw=black,fill=black!30] (3) at (1,0) {\textbf{3}};
	\node[circle,draw=black,fill=black!30] (4) at (0,-1) {\textbf{4}};
	\node[circle,draw=black,fill=black!30] (5) at (-1,0) {\textbf{5}};
	\node[circle,draw=black,fill=black!30] (6) at (-1,1) {\textbf{6}};
	
	\draw[black,->,thick](1) to [out=330,in=135,looseness=0](2);
	\draw[black,->,thick](2) to [out=270,in=90,looseness=0](3);
	\draw[black,->,thick](3) to [out=225,in=45,looseness=0](4);
	\draw[black,->,thick](4) to [out=135,in=315,looseness=0](5);
	\draw[black,->,thick](5) to [out=90,in=270,looseness=0](6);
	\draw[black,->,thick](6) to [out=45,in=225,looseness=0](1);
\end{tikzpicture}%
}\hfill
&
\resizebox{0.2\textwidth}{!}{
\begin{tikzpicture}
	\node[circle,draw=black,fill=black!30] (1) at (0,2) {\textbf{1}};
	\node[circle,draw=black,fill=black!30] (2) at (1,1) {\textbf{2}};
	\node[circle,draw=black,fill=black!30] (3) at (1,0) {\textbf{3}};
	\node[circle,draw=black,fill=black!30] (4) at (0,-1) {\textbf{4}};
	\node[circle,draw=black,fill=black!30] (5) at (-1,0) {\textbf{5}};
	\node[circle,draw=black,fill=black!30] (6) at (-1,1) {\textbf{6}};
	
	\draw[black,->,thick](1) to [out=330,in=135,looseness=0](2);
	\draw[black,->,thick](1) to [out=295,in=135,looseness=0](3);
	\draw[black,->,thick](1) to [out=270,in=90,looseness=0](4);
	\draw[black,->,thick](1) to [out=250,in=45,looseness=0](5);
	\draw[black,->,thick](1) to [out=225,in=45,looseness=0](6);
	\draw[black,->,thick](2) to [out=135,in=330,looseness=0](1);
	\draw[black,->,thick](2) to [out=270,in=90,looseness=0](3);
	\draw[black,->,thick](2) to [out=240,in=60,looseness=0](4);
	\draw[black,->,thick](2) to [out=225,in=15,looseness=0](5);
	\draw[black,->,thick](2) to [out=180,in=0,looseness=0](6);
	\draw[black,->,thick](3) to [out=135,in=295,looseness=0](1);
	\draw[black,->,thick](3) to [out=90,in=270,looseness=0](2);
	\draw[black,->,thick](3) to [out=225,in=45,looseness=0](4);
	\draw[black,->,thick](3) to [out=180,in=0,looseness=0](5);
	\draw[black,->,thick](3) to [out=165,in=330,looseness=0](6);
	\draw[black,->,thick](4) to [out=90,in=270,looseness=0](1);
	\draw[black,->,thick](4) to [out=60,in=240,looseness=0](2);
	\draw[black,->,thick](4) to [out=45,in=225,looseness=0](3);
	\draw[black,->,thick](4) to [out=135,in=315,looseness=0](5);
	\draw[black,->,thick](4) to [out=115,in=315,looseness=0](6);
	\draw[black,->,thick](5) to [out=45,in=250,looseness=0](1);
	\draw[black,->,thick](5) to [out=15,in=225,looseness=0](2);
	\draw[black,->,thick](5) to [out=0,in=180,looseness=0](3);
	\draw[black,->,thick](5) to [out=315,in=135,looseness=0](4);
	\draw[black,->,thick](5) to [out=90,in=270,looseness=0](6);
	\draw[black,->,thick](6) to [out=45,in=225,looseness=0](1);
	\draw[black,->,thick](6) to [out=0,in=180,looseness=0](2);
	\draw[black,->,thick](6) to [out=330,in=165,looseness=0](3);
	\draw[black,->,thick](6) to [out=315,in=115,looseness=0](4);
	\draw[black,->,thick](6) to [out=270,in=90,looseness=0](5);
\end{tikzpicture}%
}\hfill
&
\resizebox{0.2\textwidth}{!}{
\begin{tikzpicture}
	\node[circle,draw=black,fill=black!30] (1) at (0,2) {\textbf{1}};
	\node[circle,draw=black,fill=black!30] (2) at (1,1) {\textbf{2}};
	\node[circle,draw=black,fill=black!30] (3) at (1,0) {\textbf{3}};
	\node[circle,draw=black,fill=black!30] (4) at (0,-1) {\textbf{4}};
	\node[circle,draw=black,fill=black!30] (5) at (-1,0) {\textbf{5}};
	\node[circle,draw=black,fill=black!30] (6) at (-1,1) {\textbf{6}};
	
\end{tikzpicture}%
}\\
$\specr = 0.5891$ & $\specr = 0.3000$ & $\specr = 0.2000$ & $\specr = 0.0000$ \\
$\edger = 0.9900$ & $\edger = 0.9939$ & $\edger = 0.0000$ & $\edger = 0.0000$.
\end{tabular}%
}
\caption{Digraphs from~\cite{Anderson2019} with binary weights.}
\label{fig:simod-examples}
\end{figure}

Note that the Perturbed Random C digraph was used in~\cite{Anderson2019} to illustrate how certain edge changes can improve rankability.
In this particular example, the edges $(4,2)$ and $(6,1)$ were added to the Random digraph.
Moreover, for the considered digraphs, the rankability measures $\specr$ and $\edger$ have a strong correlation; in fact, the Spearman correlation coefficient between them is $0.92$.

\section{Rankability of Data Sets}\label{sec:rank-data}
In this section, we use $\specr$ to measure the rankability of several datasets from the world of chess and college football.
Also, we compare our rankability measure with the sensitivity and backward predictability of Elo ratings. 
All data sets and Python source code from this section are available online at~\url{https://github.com/trcameron/specR}. 

We note that the Elo rating system was developed by Aarpad Elo in the 1950's, it was adopted by the International Chess Federation in 1970, and since then has been adapted to rate other sports such as football, basketball, and soccer~\cite{Langville2012}.
For our purposes, each player (team) will have an Elo rating of $e = 0$ to start the tournament (season).
Then, after a match (game) between players (teams) $i$ and $j$, we update player (team) $i$ Elo rating as follows:
\begin{equation}\label{eq:elo-rating}
e_{i}^{(new)} = e_{i}^{(old)} + k(s-\mu),
\end{equation}
where $s=1$, $s=0$, or $s=1/2$ depending on if player (team) $i$ wins, looses, or ties, respectively. 
In addition, the parameter $k=40$ for chess and $k=32$ for college football.
Finally, we compute $\mu$ as follows:
\[
\mu=\frac{1}{1+10^{-d/\xi}},
\]
where $d=e_{i}^{(old)} - e_{j}^{(old)}$, and $\xi=400$ for chess and $\xi=1000$ for college football.

\subsection{Sinquefield Cup}\label{subsec:sinquefield-cup}
The Sinquefield Cup is an invite only round-robin chess tournament that is part of the Grand Chess Tour.
The first edition was held in 2013 as a double round-robin tournament comprised of 4 players; in 2014, it was a double round-robin tournament with 6 players.
Then, from 2015-2018, the Sinquefield cup was a single round-robin tournament between 10 players.
Most recently, in 2019, the tournament was a single round-robin between 12 players. 

For each year of the tournament, we have data that represents the outcome of the individual matches of each round.
We model this data as a digraph where the edge $(i,j)$ has weight $w_{ij}$ equal to the winning percentage of player $i$ over player $j$, where draws are counted as a half win of player $i$ over player $j$ and a half win of player $j$ over player $i$. 
For each round, we use $\specr$ to measure the rankability of this data; the results are shown in Figure~\ref{fig:SQFieldCup}.

\begin{figure}[ht]
\centering
\resizebox{0.50\textwidth}{!}{
\begin{tikzpicture}
	\begin{axis}[
		xlabel = Round Number,
		ylabel =  Rankability,
		legend pos = south east,
		cycle list name = black white]
		\addplot+[black] coordinates{
			(1,0.1667)
			(2,0.3333)
			(3,0.4339)
			(4,0.5305)
			(5,0.6555)
			(6,0.7917)
		};
		\addplot+[black!90] coordinates{
			(1,0.1000)
			(2,0.2000)
			(3,0.3000)
			(4,0.4000)
			(5,0.5000)
			(6,0.6000)
			(7,0.7000)
			(8,0.7583)
			(9,0.7832)
			(10,0.7500)
		};
		\addplot+[black!80] coordinates{
			(1,0.1111)
			(2,0.2222)
			(3,0.2873)
			(4,0.3686)
			(5,0.4093)
			(6,0.4578)
			(7,0.5761)
			(8,0.6208)
			(9,0.6711)
		};
		\addplot+[black!70] coordinates{
			(1,0.1111)
			(2,0.1944)
			(3,0.2611)
			(4,0.3325)
			(5,0.4220)
			(6,0.4846)
			(7,0.5152)
			(8,0.5625)
			(9,0.6067)
		};
		\addplot+[black!60] coordinates{
			(1,0.1111)
			(2,0.1944)
			(3,0.2639)
			(4,0.3666)
			(5,0.4328)
			(6,0.4889)
			(7,0.5395)
			(8,0.5719)
			(9,0.6593)
		};
		\addplot+[black!50] coordinates{
			(1,0.1111)
			(2,0.1944)
			(3,0.2662)
			(4,0.3319)
			(5,0.3876)
			(6,0.4882)
			(7,0.5391)
			(8,0.5857)
			(9,0.6389)
		};
		\addplot+[black!40] coordinates{
			(1,0.0909)
			(2,0.1591)
			(3,0.2220)
			(4,0.2826)
			(5,0.3336)
			(6,0.3847)
			(7,0.4252)
			(8,0.4668)
			(9,0.5219)
			(10,0.5572)
			(11,0.6021)
		};
		\legend{2013, 2014, 2015, 2016, 2017, 2018, 2019}
	\end{axis}
\end{tikzpicture}%
}
\caption{Round by Round Rankability of Sinquefield Cup}
\label{fig:SQFieldCup}
\end{figure}
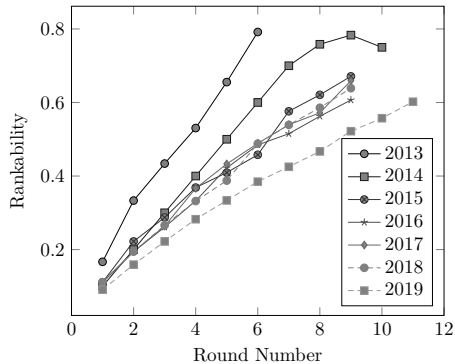

Note that as the number of rounds increases the rankability also increases, with the exception of the last round in 2014; this is expected since as the number of rounds increases so does the information about the players in the tournament.
In addition, note that the years 2013 and 2014 were significantly more rankable than the remaining years.
Moreover, both 2013 and 2014 have characteristics that are indicative of being more rankable. 
For instance, in 2013 there was a small pool of players and the final scorecard had uniformly distributed player totals. 
Furthermore, in 2014, Fabiano Caruana won his first seven matches, which was later noted as an ``historical achievement" by fellow competitor Levon Aronian.
Also, note that Caruana drew his last three matches, which helps explain the decrease in rankability at the end of 2014.

Conversely, the least rankable years are filled with point ties in the final scorecard and there is no clear ranking of all the players.
Specifically, in 2016, there was a four-way tie for second place and in 2018 there was a three-way tie for first place. 
Interestingly, the players tied for first decided to split the prize rather than take part in a tiebreaker due to other playing obligations.
Most recently, in 2019, there was a rapid blitz playoff for first place between Magnus Carlsen and Ding Liren who ended up winning.

In addition to measuring rankability, we computed the sensitivity of the Elo ratings.
In particular, given $n$ players in the tournament, we denote the $m$th round Elo ratings by
\[
\textbf{x}^{(m)} = \left[e_{1}^{(m)},\ldots,e_{n}^{(m)}\right],
\]
where $e_{i}^{(m)}$ is computed as described in~\eqref{eq:elo-rating}.
Then, for $m>1$, we denote by $y_{m}$ the correlation between $\textbf{x}^{(m)}$ and $\textbf{x}^{(m-1)}$, which is measured using the Spearman correlation coefficient.
Finally, we define the Elo rating correlation over the entire year by the weighted average
\[
2\frac{\sum_{m=1}^{r}(m-1)y_{m}}{r(r-1)},
\]
where $r$ is the number of rounds that year.
Essentially this weighted average places a larger emphasis on later rounds. 
We record the rankability at the end of the tournament and the yearly Elo rating correlation in Table~\ref{tab:SQFieldCup}.

\begin{table}
\centering
\resizebox{0.50\textwidth}{!}{
\begin{tabular}{ccc}
Year & Rankability & Rating Correlation \\
\hline
\rowcolor{black!30}2013 & 0.7917 & 0.9600 \\
2014 & 0.7500 & 0.9263 \\
2015 & 0.6711 & 0.8537 \\
2016 & 0.6067 & 0.8205 \\
2017 & 0.6593 & 0.8675 \\
2018 & 0.6389 & 0.9253 \\
\rowcolor{black!30}2019 & 0.6021 & 0.7875 \\
\end{tabular}
}
\caption{Elo Rating Correlation of SinquefieldCup}
\label{tab:SQFieldCup}
\end{table}

We have highlighted the rows with highest and lowest Elo rating correlation.
Note that these years also correspond to the highest and lowest rankability, respectively. 
There is a near monotonic relationship between the rankability and Elo rating correlation; the Spearman correlation coefficient between these two variables is $0.86$.

\subsection{Big East Football}\label{subsec:big-east-football}
We continue our testing on data from the Big East (1995-2012) football conference, which was a Division I collegiate football conference consisting of as many as 8 universities.
For each year, we model the games played week-by-week as a digraph with binary weights where the edge $(i,j)$ exists if team $i$ beat team $j$.
For each week, we update every team's Elo rating.
Then, at the end of the season we use $\specr$ to compute the rankability of the final digraph. 

As with the Sinquefield Cup, we measure the sensitivity of the Elo ratings.
In addition, we compute the backward predictability of the final Elo ratings, i.e., the percentage of games whose outcome is correctly determined by the Elo ratings and home-field advantage~\cite{Langville2012}.
The results are displayed in Table~\ref{tab:big-east}.

\begin{table}[ht]
\centering
\resizebox{0.70\textwidth}{!}{
\begin{tabular}{cccc}
Year & Rankability & Rating Correlation & Backward Predictability \\
\hline
1995	 & 0.8571 & 0.9253 & 0.8929 \\
1996	 & 0.8571 & 0.9477 & 0.9286 \\
1997	 & 0.8149 & 0.8487 & 0.75 \\
1998	 & 0.8169 & 0.8835 & 0.8571 \\
1999	 & 0.8571 & 0.8945 & 0.8571 \\
2000	 & 0.8571 & 0.9183 & 0.9286 \\
\rowcolor{black!30}2001	 & 0.8571 & 0.9397 & 0.9643 \\
2002	 & 0.8571 & 0.9525 & 0.9286 \\
2003	 & 0.8571 & 0.9075 & 0.8929 \\
2004	 & 0.6615 & 0.8077 & 0.7143 \\
2005	 & 0.8375 & 0.907 & 0.8571 \\
2006	 & 0.8049 & 0.9218 & 0.8214 \\
\rowcolor{black!30}2007	 & 0.6841 & 0.7985 & 0.7143 \\
2008	 & 0.8049 & 0.8803 & 0.8214 \\
2009	 & 0.8571 & 0.9392 & 0.8929 \\
2010	 & 0.7082 & 0.8434 & 0.75 \\
2011	 & 0.7143 & 0.7312 & 0.75 \\
2012	 & 0.7143 & 0.8874 & 0.7143 \\
\end{tabular}
}
\caption{Elo Rating Correlation and Backward Predictability of Big East Football}
\label{tab:big-east}
\end{table}

Note that the highest backward predictability corresponds with the highest rankability and, on the other hand, the lowest rankability corresponds with the lowest backward predictability.
There is a near monotonic relationship between the rankability and the Elo rating correlation and backward predictability, with a corresponding Spearman correlation coefficient of $0.89$ and $0.93$, respectively.

Figure~\ref{fig:big-east} displays the digraphs for years 2001 and 2007 of the Big East football conference.
Note that the nodes are labeled with respect to that team's Elo ranking.
Moreover, these digraphs clearly illustrate characteristics that are indicative of being more (2001) and less (2007) rankable.

In particular, note that in 2001 the best team won every game and the worst team lost every game.
In contrast, in 2007, the best team lost to the 6th best team, and the worst team beat the 6th best team.
Note that there is a cycle between the worst team, 6th best team, and best team. 
In fact, there are 205 total cycles in the digraph for 2007 as computed by Johnson's algorithm~\cite{Johnson1975}.
Finally, note that there is only one cycle in the 2001 digraph between the teams that rate 5th, 4th, and 3rd.

\begin{figure}[ht]
\centering
\begin{tabular}{ccc}
\resizebox{0.325\textwidth}{!}{
\begin{tikzpicture}
	\node[circle,draw=black,fill=black!30] (m) at (1,3) {\textbf{1}};
	\node[circle,draw=black,fill=black!30] (s) at (3,1) {\textbf{2}};
	\node[circle,draw=black,fill=black!30] (b) at (3,-1) {\textbf{3}};
	\node[circle,draw=black,fill=black!30] (v) at (1,-3) {\textbf{4}};
	\node[circle,draw=black,fill=black!30] (p) at (-1,-3) {\textbf{5}};
	\node[circle,draw=black,fill=black!30] (t) at (-3,-1) {\textbf{6}};
	\node[circle,draw=black,fill=black!30] (w) at (-3,1) {\textbf{7}};
	\node[circle,draw=black,fill=black!30] (r) at (-1,3) {\textbf{8}};
	
	\draw[black,->,thick](m) to [out=337.5,in=112.5,looseness=0](s);
	\draw[black,->,thick](m) to [out=292.5,in=112.5,looseness=0](b);
	\draw[black,->,thick](m) to [out=270,in=90,looseness=0](v);
	\draw[black,->,thick](m) to [out=247.5,in=67.5,looseness=0](p);
	\draw[black,->,thick](m) to [out=225,in=45,looseness=0](t);
	\draw[black,->,thick](m) to [out=202.5,in=22.5,looseness=0](w);
	\draw[black,->,thick](m) to [out=180,in=0,looseness=0](r);
	\draw[black,->,thick](s) to [out=270,in=90,looseness=0](b);
	\draw[black,->,thick](s) to [out=247.5,in=67.5,looseness=0](v);
	\draw[black,->,thick](s) to [out=225,in=45,looseness=0](p);
	\draw[black,->,thick](s) to [out=202.5,in=22.5,looseness=0](t);
	\draw[black,->,thick](s) to [out=180,in=0,looseness=0](w);
	\draw[black,->,thick](s) to [out=157.5,in=337.5,looseness=0](r);
	\draw[black,->,thick](b) to [out=202.5,in=22.5,looseness=0](p);
	\draw[black,->,thick](b) to [out=180,in=0,looseness=0](t);
	\draw[black,->,thick](b) to [out=157.5,in=337.5,looseness=0](w);
	\draw[black,->,thick](b) to [out=135,in=315,looseness=0](r);
	\draw[black,->,thick](v) to [out=45,in=225,looseness=0](b);
	\draw[black,->,thick](v) to [out=157.5,in=337.5,looseness=0](t);
	\draw[black,->,thick](v) to [out=135,in=315,looseness=0](w);
	\draw[black,->,thick](v) to [out=112.5,in=292.5,looseness=0](r);
	\draw[black,->,thick](p) to [out=0,in=180,looseness=0](v);
	\draw[black,->,thick](p) to [out=135,in=315,looseness=0](t);
	\draw[black,->,thick](p) to [out=112.5,in=292.5,looseness=0](w);
	\draw[black,->,thick](p) to [out=90,in=270,looseness=0](r);
	\draw[black,->,thick](t) to [out=90,in=270,looseness=0](w);
	\draw[black,->,thick](t) to [out=67.5,in=247.5,looseness=0](r);
	\draw[black,->,thick](w) to [out=45,in=215,looseness=0](r);
\end{tikzpicture}%
}
&
\hfill
&
\resizebox{0.325\textwidth}{!}{
\begin{tikzpicture}
	\node[circle,draw=black,fill=black!30] (w) at (1,3) {\textbf{1}};
	\node[circle,draw=black,fill=black!30] (co) at (3,1) {\textbf{2}};
	\node[circle,draw=black,fill=black!30] (ci) at (3,-1) {\textbf{3}};
	\node[circle,draw=black,fill=black!30] (sf) at (1,-3) {\textbf{4}};
	\node[circle,draw=black,fill=black!30] (r) at (-1,-3) {\textbf{5}};
	\node[circle,draw=black,fill=black!30] (l) at (-3,-1) {\textbf{6}};
	\node[circle,draw=black,fill=black!30] (p) at (-3,1) {\textbf{7}};
	\node[circle,draw=black,fill=black!30] (s) at (-1,3) {\textbf{8}};

	\draw[black,->,thick](w) to [out=337.5,in=112.5,looseness=0](co);
	\draw[black,->,thick](w) to [out=292.5,in=112.5,looseness=0](ci);
	\draw[black,->,thick](w) to [out=247.5,in=67.5,looseness=0](r);
	\draw[black,->,thick](w) to [out=225,in=45,looseness=0](l);
	\draw[black,->,thick](w) to [out=180,in=0,looseness=0](s);
	\draw[black,->,thick](co) to [out=247.5,in=67.5,looseness=0](sf);
	\draw[black,->,thick](co) to [out=225,in=45,looseness=0](r);
	\draw[black,->,thick](co) to [out=202.5,in=22.5,looseness=0](l);
	\draw[black,->,thick](co) to [out=180,in=0,looseness=0](p);
	\draw[black,->,thick](co) to [out=157.5,in=337.5,looseness=0](s);
	\draw[black,->,thick](ci) to [out=90,in=270,looseness=0](co);
	\draw[black,->,thick](ci) to [out=225,in=45,looseness=0](sf);
	\draw[black,->,thick](ci) to [out=202.5,in=22.5,looseness=0](r);
	\draw[black,->,thick](ci) to [out=135,in=315,looseness=0](s);
	\draw[black,->,thick](sf) to [out=90,in=270,looseness=0](w);
	\draw[black,->,thick](sf) to [out=157.5,in=337.5,looseness=0](l);
	\draw[black,->,thick](sf) to [out=135,in=315,looseness=0](p);
	\draw[black,->,thick](sf) to [out=112.5,in=292.5,looseness=0](s);
	\draw[black,->,thick](r) to [out=0,in=180,looseness=0](sf);
	\draw[black,->,thick](r) to [out=112.5,in=292.5,looseness=0](p);
	\draw[black,->,thick](r) to [out=90,in=270,looseness=0](s);
	\draw[black,->,thick](l) to [out=0,in=180,looseness=0](ci);
	\draw[black,->,thick](l) to [out=315,in=135,looseness=0](r);
	\draw[black,->,thick](l) to [out=90,in=270,looseness=0](p);
	\draw[black,->,thick](p) to [out=22.5,in=202.5,looseness=0](w);
	\draw[black,->,thick](p) to [out=337.5,in=157.5,looseness=0](ci);
	\draw[black,->,thick](p) to [out=45,in=215,looseness=0](s);
	\draw[black,->,thick](s) to [out=247.5,in=67.5,looseness=0](l);
\end{tikzpicture}%
}
\end{tabular}
\caption{Big East Football 2001 (left) and 2007 (right) Digraphs}
\label{fig:big-east}
\end{figure}
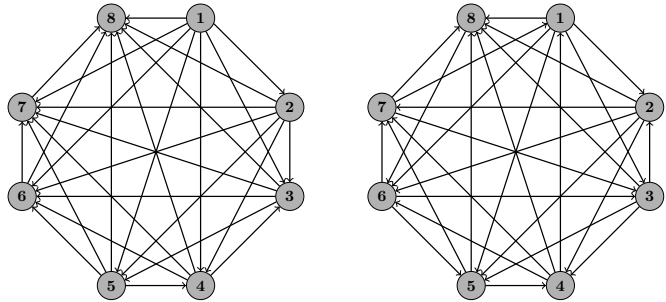

\section{Conclusion}\label{sec:conclusion}
The spectral characterization of acyclic digraphs and the spectral-degree characterization of complete dominance graphs in Theorem~\ref{thm:acyclic-spec} and Corollary~\ref{cor:dom-eigval}, respectively, lead to the rankability measure $\specr$.
This measure is cost-effective and more widely applicable than the rankability measure in~\cite{Anderson2019}.
Our measure is outlined in Algorithm~\ref{alg:specr}, and we support the details of our algorithm with several results regarding the Laplacian spectrum of complete dominance graphs.

In particular, Corollary~\ref{cor:dom-eigval-cond} implies that the Laplacian spectrum of complete dominance graphs is well-conditioned.
Furthermore, by Theorem~\ref{thm:haus-pert-edge}, a single edge change of a complete dominance graph results in a Hausdorff distance equal to the weight of that edge.
Finally, by Theorem~\ref{thm:haus-upper-bound}, the Hausdorff distance between the Laplacian spectrum of a complete dominance graph and any other simple digraph on $n$ vertices with weights between zero and one is bound above by $(n-1)$.

In Section~\ref{sec:rank-data}, we use $\specr$ to analyze the rankability of datasets from the world of chess and college football. 
Moreover, we demonstrate that $\specr$ has a strong correlation with the sensitivity and backward predictability of Elo ratings.

Since $\specr$ is founded upon a comparison to complete dominance graphs, this measure struggles to give meaningful results for sparse data, i.e., data where not all possible comparisons are explored.
Future research includes generalizing our measure, or the development of new measures, to allow for sparse data. 
In particular, we are interested in investigating other graph properties that can be used to help measure rankability, e.g., the algebraic connectivity as defined for digraphs in~\cite{Wu2005-1}.

\section*{Acknowledgements}
The authors wish to acknowledge Paul Anderson, Kathryn Behling, and Tim Chartier for many stimulating conversations that helped motivate some of the topics in this article.
We are particularly grateful to Tim Chartier for providing the college football data used in this article.
In addition, we wish to acknowledge an anonymous referee whose thoughtful comments greatly improved this article.


\end{document}